\newtheorem{theorem}{Theorem}
\newtheorem{lemma}{Lemma}
\newtheorem{defn}{Definition}
\newtheorem{property}{Property}
\newtheorem{remark}{Remark}
\newtheorem{proposition}{Proposition}
\begin{document}

\title{Digraph Arborescences and Matrix Determinants}

\author[1,2]{Sayani Ghosh}
\author[3]{Bradley S. Meyer}

\affil[1]{Schmid College of Science and Technology, Chapman University,
Orange, CA, USA} 
\affil[2]{Institute of Quantum Studies, Chapman University,
Orange, CA, USA}
\affil[3]{Department of Physics and Astronomy, Clemson University,
Clemson, SC, USA}

\maketitle

\begin{abstract}
We present a version of the matrix-tree theorem, which relates the determinant of a matrix to sums of weights of arborescences of its directed graph representation.  Our treatment allows for non-zero column sums in the parent matrix by adding a root vertex to the usually considered matrix directed graph.  We use our result to prove a version of the matrix-forest, or all-minors, theorem, which relates minors of the matrix to forests of arborescences of the matrix digraph.  We apply the theorems to calculations of the time-evolution of a system with discrete states and then consider two strategies using these theorems to compute determinants.
\end{abstract}


\section{Introduction}

The matrix-tree theorem, attributed to Tutte \cite{tutte_1948}, relates the number of spanning directed trees in a directed graph to the determinant of a minor of a zero-column-sum matrix.
Chen \cite{chen2012applied} and Chaiken \cite{chaikenSeth} generalized the theorem to include more minors of the determinant, which resulted in sums over directed forests of the parent directed graph.  These works and others (including \cite{MOON1994163}, \cite{chebotarev2006matrixforest},  and \cite{tutte2001graph}) also provided versions of the theorems for weighted directed graphs.  Recently, De Leenheer has provided an elegant proof of the matrix-tree theorem using the Cauchy-Binet formula \cite{doi:10.1137/19M1265193}.

While the matrix-tree and matrix-forest theorems are well studied, our goal is to present these theorems in a form we find amenable to calculations on general matrices.  Of particular note for the present work, Moon showed that determinants of a general matrix can be computed as sums over weights of functional digraphs, which are like directed trees but allow for loops \cite{MOON1994163}.  The loops account for the non-zero sum of a given column in the matrix.  We build on this idea, but instead of considering loops, we add a root vertex to the directed graph that accounts for the non-zero column sum.  With this modification and the Cauchy-Binet formula, we prove our version of the matrix-tree theorem.  A version of our proof that did not account for a general non-zero column sum was presented by Wang \cite{wang2009branchings}.

We then consider the case of {\em reduced} matrices that have one or more columns replaced by all zeros except for a given row that has a one.  This is related to the matrix-forest theorems \cite{chen2012applied, chaikenSeth, MOON1994163, chebotarev2006matrixforest}.  We provide straightforward proofs and explicit examples for both theorems, and we discuss how they can be used to compute matrix inverses.

As an illustration of how the theorems can be useful for calculations, we apply them to modeling of the time-evolution of a system with discrete states.  The theorems provide a means of calculating and interpreting the flow of probability from one state to another over a finite timestep.  They also provide a means of understanding the state population probabilities when the system achieves equilibrium in terms of spanning branchings in the digraph corresponding to the rate matrix describing the system

We finally turn to the general question of using the matrix-tree theorem for calculations of matrix determinants.  The number of arborescences needed to compute the determinant of an $n\times n$ matrix typically grows rapidly with $n$.  Direct calculation of the determinant of a matrix by the matrix-tree theorem can thus be challenging.  We discuss how our version of the matrix-tree theorem can be used to compute an approximation of the determinant by computation of the largest weight arborescences and how it can be useful for developing recursive strategies for computing determinants of diagonal matrices.

\section{Digraph Representation of a Matrix}
\label{sec:digraphs_and_matrices}

We begin by considering an $n \times n$ matrix $A = \left[a_{ij}\right]$.
The matrix elements $a_{ij}$ are taken to be
\begin{equation}
a_{ij} = 
\begin{cases}
-v_{ij}, & i\ne j, 1 \leq i, j \leq n\\
 \sum_{k = 1}^n v_{kj}, & i = j, 1 \leq i \leq n
\end{cases}
\label{eq:aij}
\end{equation}
The sum of the elements in column $j$ of the matrix $A$ is thus $v_{jj}$.
Any matrix may be written in the form given by Eq. (\ref{eq:aij}).  The
numbers $v_{ij}$ may themselves be sums.  We thus note that, in general, we
may have
\begin{equation}
v_{ij} = \sum_{\ell=1}^{N_{ij}}  u_{ij}^{(\ell)}
\label{eq:uij}
\end{equation}

We seek a representation of $A$ as a directed graph.
A graph $G = (V,E)$ is a set $V$ of vertices and a
set of edges $E$, which are two-element subsets of $V$.  An edge is thus
a line (segment) connecting two vertices.  A directed graph (digraph)
$\Gamma = (V,\cal{A})$ is a set $V$ of vertices and a set $\cal{A}$ of
arcs, which are ordered pairs of vertices.  In particular, an arc $(i,j)$
is an arrow directed from vertex $i$ to vertex $j$, where $i$ and $j$ are
both elements of $V$.

\begin{defn}[Matrix Digraph]
Given the $n \times n$ matrix $A$ defined in Eqs. (\ref{eq:aij}) and
(\ref{eq:uij}),
we draw a graph with $n+1$ vertices with labels ranging from $0$ to $n$.
For each term $-u_{ij}^{(\ell)}$ in matrix element $a_{ij}$ with $i \ne j$,
we draw an arc
from vertex $i$ to vertex $j$ and give the arc weight $u_{ij}^{(\ell)}$.
For each term $u_{ii}^{(\ell)}$ in matrix element $a_{ii}$ we draw an arc
from vertex $0$ to vertex $i$ with weight $u_{ii}^{(\ell)}$.  The resulting
graph is the {\bf Matrix Digraph}.
\label{defn:matdigraph}
\end{defn}

The vertex $0$ in the matrix digraph has no in arcs and is the {\it root} vertex
of the digraph.
Two properties of the matrix digraph are worth noting.
\begin{property}
If $N_{ij} > 1$, the matrix digraph has parallel arcs from vertex $i$ to $j$.
The graph is a multidigraph.
\end{property}
\begin{property}
Because arcs arising from $v_{ii}$ terms in the matrix have the root 
as their source and vertex $i$ as their target, and because all other arcs
have vertex $i$ as their source and vertex $j \neq i$ as their target,
the matrix digraph has no loops (arcs with the source and
target being the same vertex).
\label{prop:loop}
\end{property}

The total number of arcs in the matrix digraph is denoted
$m$ and a particular arc $k$ is denoted $e_k$.  The out,
or source, vertex of $e_k$ is denoted $s(e_k)$, and the in, or target,
vertex of $e_k$ is $t(e_k)$.  The weight of $e_k$ is denoted $w(e_k)$.
With these definitions, we note from Eqs. (\ref{eq:aij}) that
\begin{equation}
a_{ii} = \sum_k \delta_{t(e_k),i} w(e_k),
\label{eq:Aii2}
\end{equation}
where the sum runs over all arcs but the Kr\"onecker delta picks out
only those with vertex $i$ as the target.
Similarly, from Eq. (\ref{eq:aij}), we find
\begin{equation}
a_{ij} = -\sum_k \delta_{s(e_k),i} \delta_{t(e_k),j}
w(e_k),
\label{eq:Aij2}
\end{equation}
where, in this case, the sum runs over all arcs with vertex $i$
as the source and vertex $j$ as the target.

We now extend our matrix $A$
to include a row and column with index $0$.
We denote the extended matrix as $A'$.  The matrix elements of $A'$ are
still given by Eqs. (\ref{eq:Aii2}) and (\ref{eq:Aij2}), but $i$ and $j$ may
take on the value $0$.  We may see that $a_{i0} = 0$, since there are no
in arcs to the root vertex $0$.  We may also see that $a_{0i} = -v_{ii}$.

\begin{remark}
While the extended matrix $A'$ now has $n + 1$ rows and columns,
we can return
to the original matrix $A$ by striking the first row and column,
that is, by striking row $0$ and column $0$.of a zero-column-sum matrix. 
\label{rem:extended}
\end{remark}

We may now write the matrix $A'$ as the product of
an incidence matrix $M$ and a weight matrix $W$.

\begin{defn}[Incidence Matrix]
An incidence matrix $M$ for a digraph
is a matrix with number of rows equal to the number of vertices in
the digraph and number of columns equal to the number of arcs.
The elements of $M$ are
\begin{equation}
M_{i,k} = \delta_{t(e_k),i} - \delta_{s(e_k),i},
\label{eq:incidence_matrix}
\end{equation}
where $\delta_{i,j}$ is the usual Kr\"onecker delta.
\label{defn:M}
\end{defn}

For a column $k$ in incidence matrix $M$,
there is a $-1$ in the row corresponding to the source vertex of the
arc $e_k$ and a $1$ in the row corresponding to the target vertex of
arc $e_k$.  Since there are no loops (Property \ref{prop:loop}),
there are no columns with a $-1$ and $1$ in the same row.

\begin{defn}[Weight Matrix]
The weight matrix $W$ has a number of rows equal to the number of arcs
in the graph and a number of columns equal to the number of vertices.
The elements of $W$ are
\begin{equation}
W_{k,j} = \delta_{t(e_k),j}w(e_k)
\label{eq:weight_matrix}
\end{equation}
\label{defn:W}
\end{defn}

The $k$-th row in $W$ corresponds to the $k$-th arc in the graph.
Each row in $W$ has a single non-zero element located in the column
corresponding to the index of the invertex (that is, the target
vertex) of arc $k$.

\begin{lemma}
The extended matrix $A' = MW$, where $M$ is the incidence
matrix (Definition \ref{defn:M}) and $W$ is the weight matrix
(Definition \ref{defn:W}).
\label{lem:mw}
\end{lemma}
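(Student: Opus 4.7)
The plan is to verify $A' = MW$ by a direct expansion of the matrix product, using the two definitions element-by-element and then splitting into the diagonal ($i=j$) and off-diagonal ($i\ne j$) cases so that the resulting expressions can be matched against Eqs. (\ref{eq:Aii2}) and (\ref{eq:Aij2}). Specifically, I would start by writing
\[
(MW)_{ij} \;=\; \sum_{k} M_{i,k} W_{k,j} \;=\; \sum_{k} \bigl(\delta_{t(e_k),i} - \delta_{s(e_k),i}\bigr)\, \delta_{t(e_k),j}\, w(e_k),
\]
so that every subsequent step is just simplification of this one sum using Kr\"onecker delta identities.

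For the off-diagonal case $i\ne j$, the first term $\delta_{t(e_k),i}\delta_{t(e_k),j}$ is identically zero, so only $-\sum_k \delta_{s(e_k),i}\delta_{t(e_k),j} w(e_k)$ survives, which is exactly the right-hand side of Eq. (\ref{eq:Aij2}) and equals $a_{ij}$. For the diagonal case $i=j$, I would collect terms as
\[
(MW)_{ii} \;=\; \sum_{k} \delta_{t(e_k),i}\, w(e_k) \;-\; \sum_{k} \delta_{s(e_k),i}\delta_{t(e_k),i}\, w(e_k).
\]
Here the first sum already matches Eq. (\ref{eq:Aii2}), so what remains to check is that the second sum vanishes; this is where Property \ref{prop:loop} enters, since the summand $\delta_{s(e_k),i}\delta_{t(e_k),i}$ is nonzero only on a loop at vertex $i$, and no such arc exists in the matrix digraph. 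Combining both cases gives $(MW)_{ij}=a_{ij}$ for all $0\le i,j\le n$.

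I do not expect any serious obstacle: the argument is a bookkeeping check, and the only substantive ingredient is the no-loops property, which has already been established. The main thing to be careful about is making sure the indexing ranges include the root vertex $0$, so that the identity really holds for the extended matrix $A'$ rather than just $A$; in particular one should note that when $i=0$ the first sum in the diagonal expansion is empty (no arcs target the root), correctly giving $a_{00}=0$, and when $j=0$ the off-diagonal sum is empty for the same reason, correctly giving $a_{i0}=0$, consistent with the observations made just before Remark \ref{rem:extended}.
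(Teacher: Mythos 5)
Your proposal is correct and follows essentially the same route as the paper: expand $(MW)_{ij}=\sum_k M_{i,k}W_{k,j}$, kill the $\delta_{s(e_k),i}\delta_{t(e_k),i}$ term on the diagonal via Property \ref{prop:loop}, kill the $\delta_{t(e_k),i}\delta_{t(e_k),j}$ term off the diagonal because an arc has a single target, and match the survivors to Eqs. (\ref{eq:Aii2}) and (\ref{eq:Aij2}). Your added check of the $i=0$ and $j=0$ cases is a nice touch the paper leaves implicit, but it is not a different argument.
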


\begin{proof}
The $(i,j)$ element of the $(n + 1) \times (n + 1)$ matrix $MW$ is
\[
\left(MW\right)_{i,j} = \sum_k M_{i,k} W_{k,j}
\]
\begin{equation}
= \sum_k \delta_{t(e_k),i}\delta_{t(e_k),j}w(e_k)
- \sum_k \delta_{s(e_k),i}\delta_{t(e_k),j}w(e_k).
\label{eq:mw_def}
\end{equation}
If $i = j$, the second term in the sum in
Eq. (\ref{eq:mw_def}) is zero since the matrix digraph
contains no loops (Property \ref{prop:loop}) and,
hence, the source and target of any arc must be distinct.
In this case,
\begin{equation}
\left(MW\right)_{i,i} = \sum_k \delta_{t(e_k),i}w(e_k).
\label{eq:mw_ii}
\end{equation}
If $i \neq j$, the first term in the sum in
Eq. (\ref{eq:mw_def}) is zero since an arc cannot have two distinct targets.
In this case,
\begin{equation}
\left(MW\right)_{i,j\neq i} = 
- \sum_k \delta_{s(e_k),i}\delta_{t(e_k),j}w(e_k).
\label{eq:mw_ij}
\end{equation}
Comparison of Eqs. (\ref{eq:Aii2}) and (\ref{eq:mw_ii}) and
Eqs. (\ref{eq:Aij2}) and (\ref{eq:mw_ij}) show that
\begin{equation}
A' = M W.
\label{eq:DAMW}
\end{equation}
\end{proof}

Lemma \ref{lem:mw} is true for the case of the $(n + 1) \times (n + 1)$
extended matrix $A'$ that includes row and column $0$.
It is also true for the case of the $n \times n$ matrix that does not
include a row $0$ and column $0$ if we imagine striking row $0$ of
$M$ and column $0$ of $W$ (remark \ref{rem:extended}).

As an example to illustrate the ideas in this section, consider the $3 \times 3$ matrix
\begin{equation}
    A = \begin{pmatrix}
4 & -1 & -1\\
-1 & 4 & -3 \\
-1  & -2 & 5
\end{pmatrix}
\label{eq:A}
\end{equation}
with
\begin{equation}
    det(A) = 42
\label{eq:example_det}
\end{equation}

The matrix digraph for $A$ is shown in Fig. \ref{fig:3digraph}.  The arcs and their weights in the matrix digraph in 
Fig. \ref{fig:3digraph}, sorted by their invertex, are shown in Table \ref{tab:arcs}.

\begin{center}
\begin{figure}[t]
\centering
    \includegraphics[height=2in]{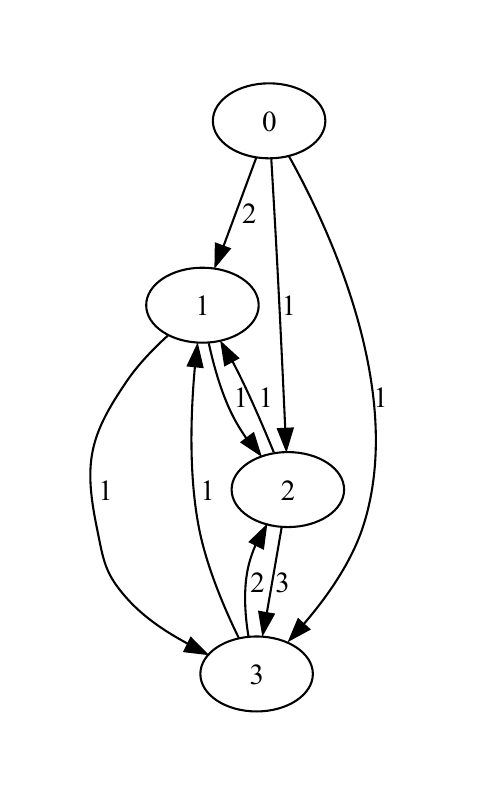}
    \caption{The matrix digraph corresponding to the matrix in Eq. (\ref{eq:A}).} \label{fig:3digraph}
\end{figure}
\end{center} 

\begin{table}
\begin{center}
    \begin{tabular}{ |c | c | c | }
        \hline
            Number & Arc & Weight\\
            \hline
            1 & (0, 1) & 2 \\
            2 & (2, 1) & 1 \\
            3 & (3, 1) & 1 \\
            4 & (0, 2) & 1 \\
            5 & (1, 2) & 1 \\
            6 & (3, 2) & 2 \\
            7 & (0, 3) & 1 \\
            8 & (1, 3) & 1 \\
            9 & (2, 3) & 3 \\
        \hline
    \end{tabular}
    \caption{\label{tab:arcs}Arcs in the matrix digraph in Fig. \ref{fig:3digraph}}        
\end{center}
\end{table}
The incidence matrix for the matrix digraph in Fig. \ref{fig:3digraph} is
\begin{equation}
    M = \begin{pmatrix}
-1 & 0 & 0 & -1 & 0 & 0 & -1 & 0 & 0\\
1 & 1 & 1 & 0 & -1 & 0 & 0 & -1 & 0\\
0 & -1 & 0 & 1 & 1 & 1 & 0 & 0 & -1\\
0 & 0 & -1 & 0 & 0 & -1 & 1 & 1 & 1\\
\end{pmatrix}
\label{eq:M}
\end{equation}
The weight matrix is
\begin{equation}
    W = \begin{pmatrix}
0 & 2 & 0 & 0\\
0 & 1 & 0 & 0\\
0 & 1 & 0 & 0\\
0 & 0 & 1 & 0\\
0 & 0 & 1 & 0\\
0 & 0 & 2 & 0\\
0 & 0 & 0 & 1\\
0 & 0 & 0 & 1\\
0 & 0 & 0 & 3\\
\end{pmatrix}
\label{eq:W}
\end{equation}

The extended matrix $A' = M W$.  It is easy to verify that the matrix $A$ is obtained by striking the first row and column of $A'$.

\section{The Matrix-Tree Theorem}

For a directed graph $\Gamma$, the indegree of any vertex is the number of arcs
entering that vertex while the outdegree is the number of
arcs exiting that vertex.  A branching $B$ on a graph is an acyclic subgraph
of $\Gamma$ that has no vertex with indegree larger than one.
If a digraph has $n$ vertices, a spanning branching, or arborescence,
has $n-1$ arcs.  The underlying graph is a tree (an acyclic connected graph).
The root of the arborescence is the one and only vertex with indegree zero.
A general branching has $n-1$ arcs or fewer and one or more roots.
Its underlying graph is a forest.

We compute the determinant of $A$,
denoted $det(A)$.
From Eq. (\ref{eq:DAMW}), we may write
\begin{equation}
det(A') = det(M W).
\label{eq:detDAMW}
\end{equation}
To calculate $det(M W)$,
we use the Cauchy-Binet formula, which states that,
if $M$ is an $n \times m$ matrix and $W$ is an $m \times n$ matrix,
the determinant of the $n \times n$ matrix $MW$ is
\begin{equation}
det(M W) = \sum_S det(M_S) det(W_S),
\label{eq:cauchy-binet}
\end{equation}
where the sum runs over all subsets $S$ of $\{1, ..., m\}$ with $n$ elements.
There are $C(m,n)$ such subsets $S$, where $C(m,n)$ is the usual binomial
coefficient.  $M_S$ is an $n \times n$
submatrix of $M$ consisting of the set of columns
$\{k\}$ of $M$ such that $k \in S$ while $W_S$ is an $n \times n$
submatrix of $W$ 
consisting of the set of rows $\{k\}$ of $W$ such that $k \in S$.

We apply Eq. (\ref{eq:cauchy-binet}) to $A$ for the
matrix digraph.
We consider the elements of $\{1, ..., m\}$ to be the labels of the
arcs in our digraph.  An $n$-element subset $S$ of
$\{1, ..., m\}$ thus corresponds to a subgraph of the digraph that
consists of a set of $n$ arcs $\{e_k\}$ such that $k \in S$.
We now consider striking row $0$ and column $0$, which will give the
determinant of the desired $n \times n$
matrix (remark \ref{rem:extended}).
This corresponds to striking row $0$ of $M$ and column $0$ of $W$ and thus
row $0$ of each submatrix $M_S$ and column $0$ of each submatrix $W_S$.
Since the resulting matrix $MW$ now has $n$ rows and $n$ columns,
the resulting subsets $S$ now have $n$ elements corresponding to $n$-arc
subgraphs of the matrix digraph.  Our procedure then is
to consider $n$ element subsets $S$ and
to work with the $(n + 1) \times n$ submatrices $M_S$ and
$n \times (n + 1)$
submatrices $W_S$ but then to strike row $0$ of $M_S$ and column $0$ of
$W_S$ before computing the determinants.

We also, without loss of generality, imagine that $M$ and $W$
are sorted by the index of their invertices.  In particular,
the columns of $M$ are sorted by invertex of the arc corresponding to
the column.  The rows of $W$ are then sorted by invertex of the arc
corresponding to the row.  There are no arcs into
vertex $0$; thus, the first $\ell_1$ columns of $M$ correspond to the $\ell_1$
arcs that have vertex $1$ as the invertex (and thus have a $1$ in row $1$).
The first $\ell_1$ rows of $W$ thus have entries (the values $w(e_k)$)
in column $1$.  The $\ell_1$ columns in $M$ are then followed by $\ell_2$
columns in $M$ that correspond to the $\ell_2$ arcs that have vertex $2$
as the invertex, and the $\ell_1$ rows in $W$ are followed by $\ell_2$
rows in $W$ with entries in column $2$.  This sorting proceeds until all arcs
are accounted for.

We now consider the submatrices $M_S$ and $W_S$.
$W_S$ is an
$n \times (n + 1)$ weight matrix whose rows correspond to the same arc as do
the columns in
$M_S$.  The first column is all zeros, but, because of our sorted arrangement
of the rows of $W$, there is one entry per row and the column number of
the non-zero element in each row is larger than or equal to that
in the previous row.

\begin{lemma}
If a subgraph in the matrix digraph consists of a set of
arcs $\{e_k\}$ with $k \in S$
and has one or more
vertices with indegree larger than one, then $det(W_S) = 0$.  Otherwise,
\begin{equation}
det(W_S) = \prod_{k \in S} w(e_k).
\label{eq:detws}
\end{equation}
\label{lemma:detws}
\end{lemma}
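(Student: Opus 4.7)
The plan is to exploit the very rigid structure that Definition~\ref{defn:W} forces on $W$: each row $k$ has exactly one nonzero entry, namely $w(e_k)$, sitting in column $t(e_k)$. Consequently each row of the submatrix $W_S$ has exactly one nonzero entry, placed in the column indexed by the target vertex of its corresponding arc. Since no arc has vertex $0$ as its target, the $0$-th column of $W_S$ is already all zero, so striking it to obtain an $n\times n$ matrix loses no information.

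The central observation is then a column-count argument. For $i\in\{1,\dots,n\}$, the number of nonzero entries in column $i$ of $W_S$ equals the indegree $d_i$ of vertex $i$ in the subgraph determined by $S$. Because $|S| = n$ and every arc targets some vertex in $\{1,\dots,n\}$, we have $\sum_{i=1}^n d_i = n$.

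For the first case of the lemma, I would assume some vertex $v$ has $d_v>1$. Pigeonhole on $\sum d_i = n$ then forces at least one other vertex $v'$ to satisfy $d_{v'}=0$, so column $v'$ of $W_S$ (after striking column $0$) is identically zero and $\det(W_S)=0$. For the second case, assuming no indegree exceeds one, the same sum identity forces $d_i=1$ for every $i\in\{1,\dots,n\}$, so $S$ consists of exactly one arc into each vertex. The sorting convention on the rows of $W$ (described just before the lemma) then ensures that the $i$-th row of $W_S$ is precisely the row associated with the unique arc into vertex $i$, and its single nonzero entry $w(e_k)$ sits in column $i$. Thus $W_S$ (after striking column $0$) is diagonal, and $\det(W_S)=\prod_{k\in S} w(e_k)$.

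There is no serious obstacle here; the argument is essentially bookkeeping. The only delicate point is keeping the dimensions straight: $W_S$ is initially $n\times(n+1)$, and both the vanishing column $0$ and the pigeonhole count rely on the fact that every arc targets a vertex in $\{1,\dots,n\}$ and that $|S|=n$. I would make sure to invoke both the absence of arcs into the root (from the paragraph after Definition~\ref{defn:matdigraph}) and the sorting of rows of $W$ by invertex to justify the diagonal form in the non-degenerate case.
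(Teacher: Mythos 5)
Your proposal is correct and follows essentially the same route as the paper's proof: both use the fact that each row of $W_S$ has a single nonzero entry in the column of its arc's target, conclude that a repeated target forces an all-zero column (your pigeonhole count just makes the paper's implicit counting explicit), and both invoke the sorting of rows by invertex to get a diagonal matrix in the nondegenerate case.
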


\begin{proof}
Consider a subgraph of the matrix digraph that consists of
a set of arcs $\{e_k\}$ with $k \in S$.
The arcs in the subgraph correspond to rows in $W_S$.
Each row of $W_S$ has a single non-zero element in the column corresponding
to the invertex of corresponding arc.
Suppose two rows in $W_S$ have
the same column number for their non-zero elements.  When column 0 of
$W_S$ is struck, there must be a
zero in the diagonal element of one of the rows.
This means that at least one
of the columns in $W_S$ (in addition to column $0$)
must contain all zeros and, after striking
column $0$ in the $n \times (n+1)$ version of $W_S$, $det(W_S) = 0$.
Thus,
no subgraph of the digraph contributes to $det( MW)$
if it contains
a vertex with indegree equal to two.  This holds {\em a fortiori} if 
the subgraph has a vertex with
indegree greater than two because, in such a case, there
will be more than one column in $W_S$ (after striking column $0$) containing
all zeros.

Only subgraphs of the matrix digraph that have indegree equal to
one for each vertex other than $0$ contribute to $det(MW)$.  For such
subgraphs, because of the sorted arrangement of the arcs, the contributing
submatrix $W_S$ will be diagonal.  The determinant of a diagonal
matrix is the product of its diagonal elements; hence, Eq. (\ref{eq:detws}).
\end{proof}

We now consider the incidence matrices $M_S$.
\begin{lemma}
If a subgraph in the matrix digraph consists of a set of
arcs $\{e_k\}$ with $k \in S$ and
contains a cycle,
then $det(M_S) = 0$.  Otherwise, $det(M_S) = 1$.
\label{lemma:detms}
\end{lemma}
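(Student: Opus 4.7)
The plan is to establish the two cases of the dichotomy — $\det(M_S)=0$ when the subgraph contains a cycle and $\det(M_S)=1$ otherwise — using the fact (from Lemma \ref{lemma:detws}) that only subgraphs in which every non-root vertex has indegree exactly one contribute to the Cauchy-Binet sum. Under that restriction, the relevant subgraphs on $n$ arcs either contain a directed cycle (which cannot pass through vertex $0$, since vertex $0$ has indegree zero) or form a spanning arborescence rooted at vertex $0$.

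For the cycle case, I would exhibit an explicit linear dependence among columns of $M_S$. Given a directed cycle $v_1\to v_2\to\cdots\to v_r\to v_1$ in the subgraph, summing the $r$ columns corresponding to the cycle arcs produces the zero vector: at each cycle vertex $v_i$ the incoming arc contributes $+1$ and the outgoing arc contributes $-1$ to row $v_i$, while rows of vertices off the cycle receive no contribution at all. Since vertex $0$ lies on no cycle, striking row $0$ does not destroy this dependence, and $\det(M_S)=0$ follows.

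For the acyclic case, the subgraph is a spanning arborescence rooted at vertex $0$, and I would prove $\det(M_S)=1$ by induction on $n$. The base case $n=1$ consists of a single arc $(0,1)$, so $M_S=[+1]$. For the inductive step, any spanning arborescence on $n+1$ vertices has at least one non-root leaf $v$; pick one. Because each non-root vertex has indegree one, the sorting of columns by target vertex places the unique arc with target $v$ in column position $v$, contributing $+1$ to entry $(v,v)$. Since $v$ is a leaf, no arc of $S$ has $v$ as its source, so row $v$ of $M_S$ has its only nonzero entry at column $v$. Laplace expansion along row $v$ gives
\[
\det(M_S) \;=\; (+1)\,(-1)^{v+v}\,\det\!\bigl(M_S^{(v,v)}\bigr) \;=\; \det\!\bigl(M_S^{(v,v)}\bigr),
\]
and the minor $M_S^{(v,v)}$ is the incidence submatrix of the smaller arborescence on $\{0,1,\ldots,n\}\setminus\{v\}$, still rooted at $0$ and with sorting inherited from the parent matrix, so the inductive hypothesis closes the argument.

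The main obstacle is getting the cofactor sign to equal exactly $+1$ rather than $\pm 1$: this is precisely what forces me to combine the sorted-column convention with the indegree-one restriction of Lemma \ref{lemma:detws}, which together guarantee that the leaf's pivot sits on the diagonal of $M_S$. Absent either ingredient the cofactor sign could flip, and the inductive step would only yield $|\det(M_S)|=1$.
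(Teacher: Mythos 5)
Your proof is correct, but it takes a genuinely different route from the paper's. The paper handles both cases at once with a column-operation ``regression'': for each column whose arc does not emanate from vertex $0$, it repeatedly adds the column of the unique in-arc to that arc's source, so that every column either is converted into a root arc (yielding, after striking row $0$ and invoking the sorted ordering, the identity matrix) or collapses to all zeros when the regression closes into a cycle. You instead split the dichotomy: for the cyclic case you exhibit the standard linear dependence obtained by summing the columns of a directed cycle (a dependence that of course survives deletion of row $0$), and for the acyclic case you induct on leaf removal with a Laplace expansion along the leaf's row. Both arguments rest on the same two ingredients --- the indegree-one restriction inherited from Lemma \ref{lemma:detws} and the sorting of columns by target vertex --- but use them differently: the paper needs the sorting only at the end, to recognize that the transformed matrix is the identity, whereas you need it throughout the induction to keep each leaf's pivot on the diagonal and so pin the cofactor sign to $+1$ rather than $\pm 1$, a point you rightly flag as the crux. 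Your cycle argument is arguably cleaner, being a one-line dependence rather than a byproduct of the regression; the paper's unified sweep buys the explicit ``root graph'' of $S$ as a conceptual object. One sentence worth adding: the dichotomy ``directed cycle or spanning arborescence rooted at $0$'' needs indegree \emph{exactly} one at every non-root vertex, which follows from Lemma \ref{lemma:detws} only after counting ($n$ arcs distributed over $n$ non-root vertices, none entering vertex $0$); you assert the conclusion but should record that counting step.
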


\begin{proof}
Before striking row $0$, $M_S$ is an $(n + 1) \times n$ incidence matrix.
It corresponds to an $n$-arc subgraph of the full digraph.
The columns in $M_S$ correspond to a particular subset $\{e_k\}$ of
arcs in the matrix digraph such that $k \in S$.  By lemma \ref{lemma:detws},
any $M_S$ that
has one in more than one row may be excluded since it corresponds to
a subgraph with a vertex with indegree larger than unity.

We now consider
the remaining $M_S$ that may correspond to non-zero contributions to
Eq. (\ref{eq:cauchy-binet}).  In such an $M_S$,
each arc either has vertex $0$ as a source
or does not.
Consider a column $k_1$ in $M_S$ that corresponds
to an arc $e_{k_1}$ such that $s(e_{k_1}) \neq 0$.  By lemma \ref{lemma:detws},
each vertex other than $0$ in the subgraph $S$ has indegree exactly
one. There thus must be another arc $e_{k_2}$ in the subgraph with 
$t(e_{k_2}) = s( e_{k_1} )$.  We add column $k_2$ corresponding to
arc $e_{k_2}$ to column $k_1$.  Column $k_1$
now has $-1$ in row $s(e_{k_2})$ and
$1$ in row $t(e_{k_1})$ unless $s(e_{k_2}) = t(e_{k_1})$, in which
case column $k_1$ now has all zeros and,
after striking row $0$ in $M_S$, $det(M_S) = 0$.
Because $s(e_{k_2}) = t(e_{k_1})$, arcs $k_1$ and $k_2$ form a two-arc
cycle; thus, the subgraph corresponding to subset $S$ must contain
no two-arc cycles to contribute to $det(MW)$.

The argument may be extended.
If $s(e_{k_2}) \neq 0$, we may repeat the above procedure by
finding the arc $e_{k_3}$ whose target is the source of $e_{k_2}$.  We add
column $k_3$ to column $k_1$.  Now column $k_1$ has $-1$ in row
$s(e_{k_3})$ and $1$ in row $t(e_{k_1})$.  This procedure is repeated
until column operations have converted
the column $k_1$ into one in which there is
$-1$ in row $0$ and $1$ in row $t(e_{k_1})$.  If at any stage of
the regression a cycle forms, the column $k_1$
will have all zeros and, after striking row $0$, $det(M_S) = 0$.
We repeat this procedure for all columns that correspond to
arcs whose source is not vertex $0$.  If no cycles appear,
the resulting incidence matrix will have $-1$ in each column of
row $0$ and, because of the sorted arrangement of the arcs, $1$
in the $(i+1,i)$ element.

The above regression procedure thus
produces a new incidence matrix $M_S'$ corresponding to a
graph that has a single arc from
vertex $0$ to each of the other vertices, if the subgraph is
acyclic.  Otherwise $det(M_S) = 0$.  In other words, the
subgraph will only contribute to $det(MW)$ if there is a
path (defined in the usual sense as a sequence of arcs that joins a sequence of distinct vertices) from vertex $0$ to each of the other vertices.  The new incidence matrix is that for an arborescence of the
graph in which all arcs have vertex $0$ as the source.
We call this graph the {\em root graph} of $S$.  In general,
an arc in this graph from vertex $i$ to vertex $j$ means that
there is a path from vertex $i$ to vertex $j$ in the parent graph.
In our particular case, we see that only subgraphs that have
a path from vertex $0$ to each of the other vertices $i$
contribute to the overall determinant.

If we now strike row $0$ from the $(n + 1) \times n$ version of
$M_S'$, we are left with an $n \times n$ identity matrix.
Thus, $det(M_S') = 1$.  Because addition
of columns in a matrix leaves the determinant of the matrix unchanged,
$det(M_S) = det(M_S') = 1$.
\end{proof}

We now prove our version of the matrix-tree theorem.

\begin{theorem}
Consider the $n \times n$ matrix $A$ in Eq. (\ref{eq:aij}).
\begin{equation}
det(A) =
\sum_{S} \prod_{k \in S} w(e_k)
\label{eq:detDAsum}
\end{equation}
where $S$ is a subset of arc labels that correspond to a subset of arcs
in the matrix digraph of $A$ that form an arborescence.
\label{theorem:detDA}
\end{theorem}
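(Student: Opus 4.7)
The plan is to combine Lemma~\ref{lem:mw}, the Cauchy--Binet formula, and the two preceding lemmas (Lemma~\ref{lemma:detws} and Lemma~\ref{lemma:detms}). First, by Lemma~\ref{lem:mw} we have $A' = MW$, and by Remark~\ref{rem:extended} the determinant of the original $n\times n$ matrix $A$ equals the determinant of the matrix obtained from $MW$ by striking row $0$ and column $0$. This matches the reduction described just before Lemma~\ref{lemma:detws}: it is equivalent to summing Cauchy--Binet over $n$-element subsets $S \subseteq \{1,\ldots,m\}$ of the arc labels and, for each such $S$, striking row $0$ of the $(n+1)\times n$ submatrix $M_S$ and column $0$ of the $n\times(n+1)$ submatrix $W_S$ before taking determinants.

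Next, I would apply Cauchy--Binet to obtain
\[
\det(A) \;=\; \sum_S \det(M_S)\,\det(W_S),
\]
where the sum is over all $n$-element subsets $S$ of the arc labels. Each such $S$ corresponds to an $n$-arc subgraph of the matrix digraph. The two lemmas then filter this sum. By Lemma~\ref{lemma:detws}, $\det(W_S) = 0$ unless every vertex other than $0$ has indegree exactly one in the subgraph indexed by $S$, in which case $\det(W_S) = \prod_{k\in S} w(e_k)$. By Lemma~\ref{lemma:detms}, among the subgraphs surviving the first filter, $\det(M_S) = 0$ if the subgraph contains a cycle, and $\det(M_S) = 1$ otherwise. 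Multiplying the two factors leaves a contribution of $\prod_{k\in S} w(e_k)$ precisely from those $S$ whose subgraph has every non-root vertex of indegree $1$ and is acyclic.

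Finally, I would identify this class of subgraphs with arborescences of the matrix digraph. The matrix digraph has $n+1$ vertices, so a spanning branching has $n$ arcs, matching $|S| = n$. Since vertex $0$ has no in-arcs by construction, the indegree-one condition at each of the vertices $1,\ldots,n$ together with acyclicity forces the subgraph to be a spanning arborescence with root $0$, and conversely every such arborescence satisfies both conditions. Substituting back into the Cauchy--Binet sum yields exactly Eq.~(\ref{eq:detDAsum}).

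This proof is largely an assembly: the two lemmas do the real combinatorial work. The only step requiring a little care is the last identification, namely checking that the two filters together (indegree one at every non-root vertex, plus acyclicity, plus $n$ arcs on $n+1$ vertices) characterize arborescences rooted at $0$. I expect no genuine obstacle, since the filters force each non-root vertex to have a unique in-arc and acyclicity forces the resulting predecessor chains to terminate at the only indegree-zero vertex, which is $0$; thus $0$ is a common ancestor of every other vertex, which is the definition of an arborescence rooted at $0$.
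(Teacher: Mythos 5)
Your proposal is correct and follows essentially the same route as the paper's own proof: apply Cauchy--Binet to $A' = MW$ after striking row and column $0$, use Lemmas~\ref{lemma:detws} and \ref{lemma:detms} to kill all subsets $S$ except those whose $n$-arc subgraphs have indegree one at every non-root vertex and are acyclic, and identify these with arborescences rooted at vertex $0$. Your closing paragraph spelling out why the two filters characterize arborescences is slightly more explicit than the paper's, but the argument is the same.
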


\begin{proof}
Consider a subset $S$ in Eq. (\ref{eq:cauchy-binet}).
If the $n$ arcs $e_k$ for $k \in S$
form an acyclic subgraph of the
matrix digraph (that includes vertex $0$) with
indegree equal to zero for vertex $0$ and indegree equal
to one for all other vertices, then
by lemmas \ref{lemma:detws} and \ref{lemma:detms},
\begin{equation}
det(M_S) det(W_S) = \prod_{k \in S} w(e_k)
\label{eq:detmsws}
\end{equation}
Otherwise, $det(M_S) det(W_S) = 0$.
The $n$ arc subset $\{e_k\}$ for
$k \in S$ constitutes an arborescence of the digraph rooted at
vertex $0$.  By Eq. (\ref{eq:cauchy-binet}),
Eq. (\ref{eq:detDAMW}) holds, with subsets $S$ restricted to
those corresponding to arborescences in the matrix digraph.
\end{proof}

As an example of Theorem \ref{theorem:detDA}, consider the determinant of the matrix in Eq. (\ref{eq:A}) and the corresponding matrix digraph in Fig. \ref{fig:3digraph}.  The sixteen arborescences are shown in Fig. \ref{fig:branchings}, and the sum of the weights of these branching is 42, in agreement with Eq. (\ref{eq:example_det}).

\begin{figure}
    \centering
    \begin{subfigure}[t]{0.2\textwidth}
        \centering
        \includegraphics[width=\linewidth]{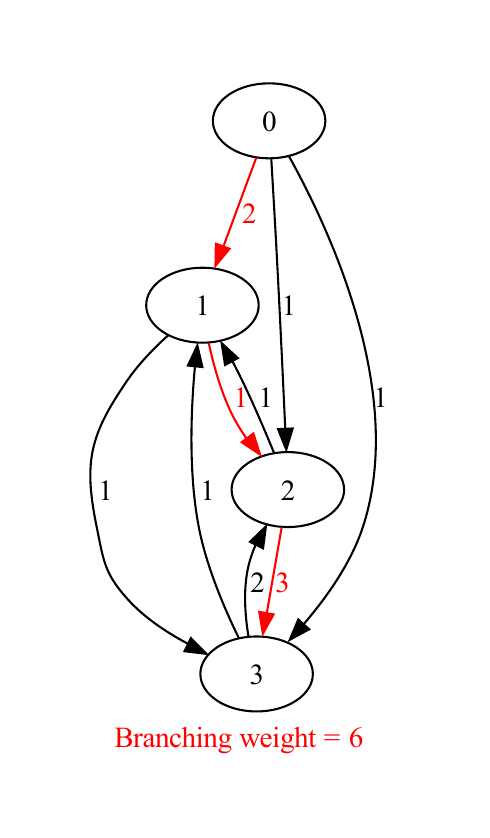} \subcaption{}
    \end{subfigure}
    \hfill
    \begin{subfigure}[t]{0.2\textwidth}
        \centering
        \includegraphics[width=\linewidth]{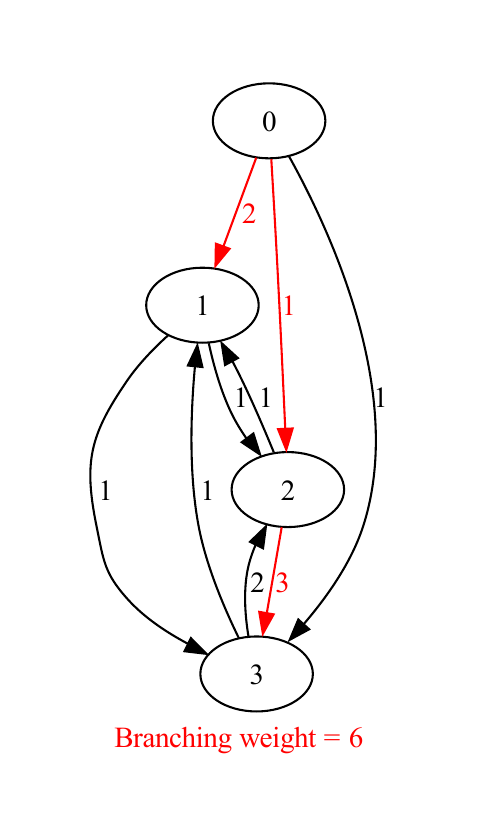} 
        \subcaption{}
    \end{subfigure}
    \hfill
    \begin{subfigure}[t]{0.2\textwidth}
        \centering
        \includegraphics[width=\linewidth]{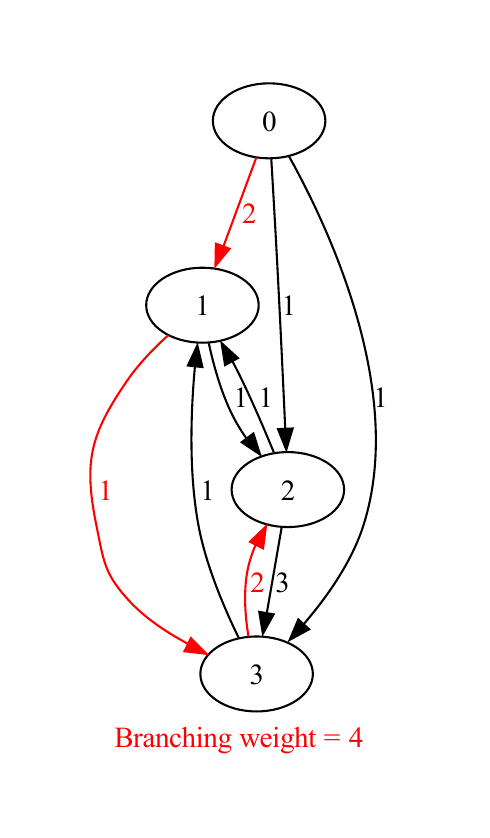} 
        \subcaption{}
    \end{subfigure}
    \hfill
    \begin{subfigure}[t]{0.2\textwidth}
        \centering
        \includegraphics[width=\linewidth]{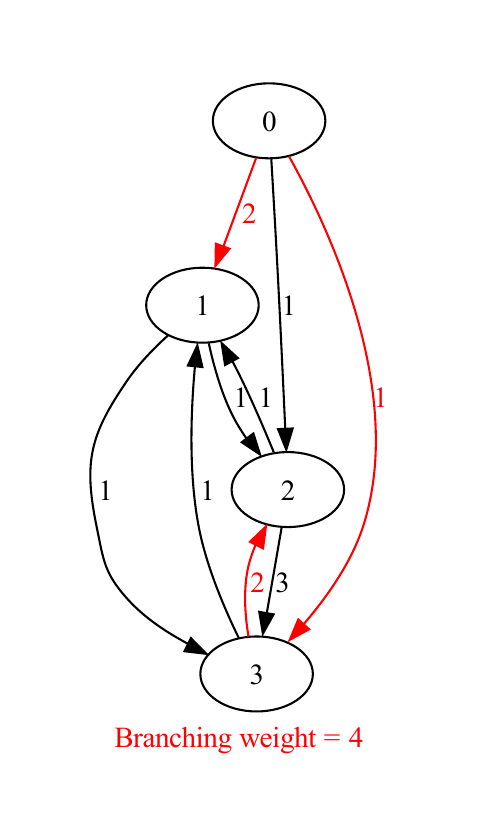} 
        \subcaption{}
    \end{subfigure}
    \begin{subfigure}[t]{0.2\textwidth}
        \centering
        \includegraphics[width=\linewidth]{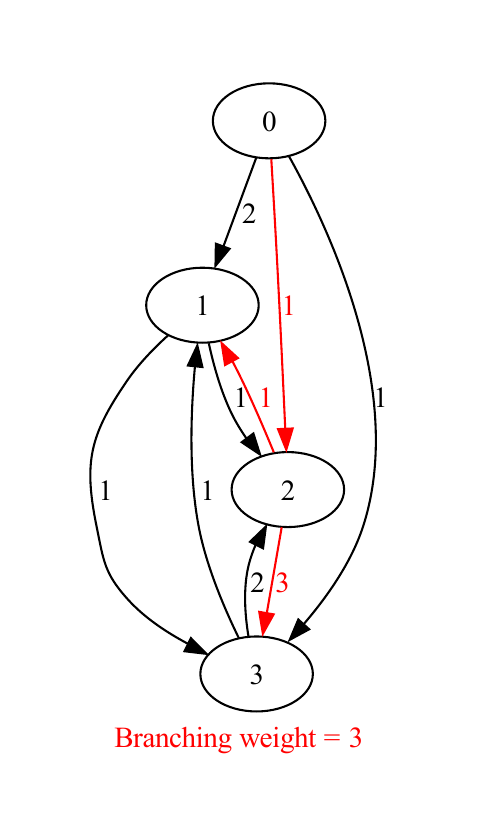} \subcaption{}
    \end{subfigure}
    \hfill
    \begin{subfigure}[t]{0.2\textwidth}
        \centering
        \includegraphics[width=\linewidth]{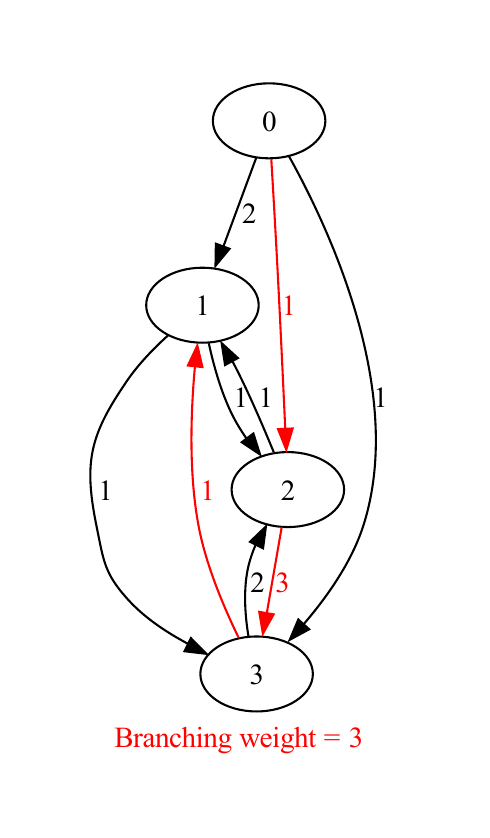} 
        \subcaption{}
    \end{subfigure}
    \hfill
    \begin{subfigure}[t]{0.2\textwidth}
        \centering
        \includegraphics[width=\linewidth]{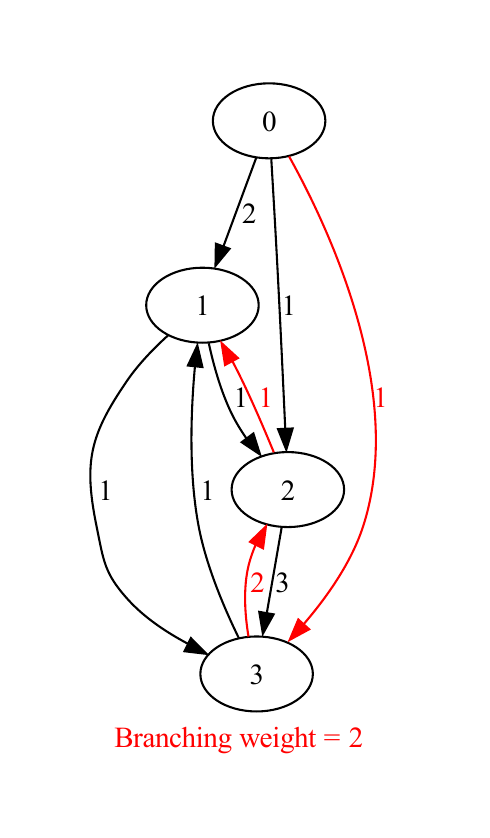} 
        \subcaption{}
    \end{subfigure}
    \hfill
    \begin{subfigure}[t]{0.2\textwidth}
        \centering
        \includegraphics[width=\linewidth]{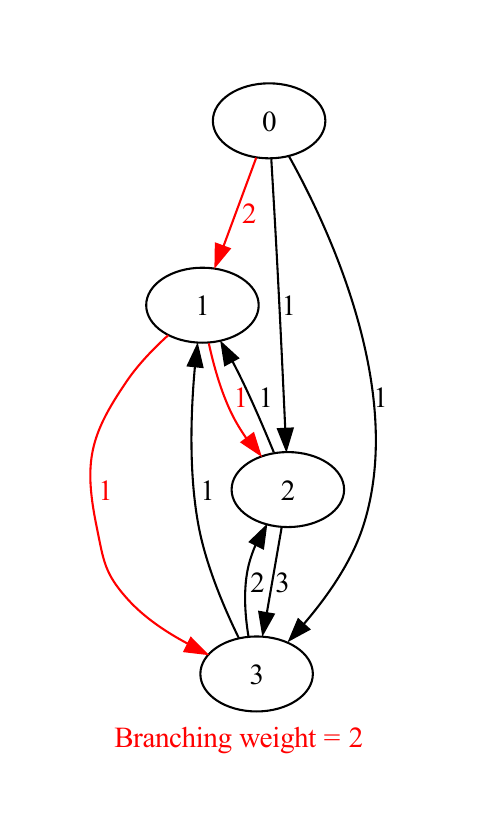} 
        \subcaption{}
    \end{subfigure}
        \begin{subfigure}[t]{0.2\textwidth}
        \centering
        \includegraphics[width=\linewidth]{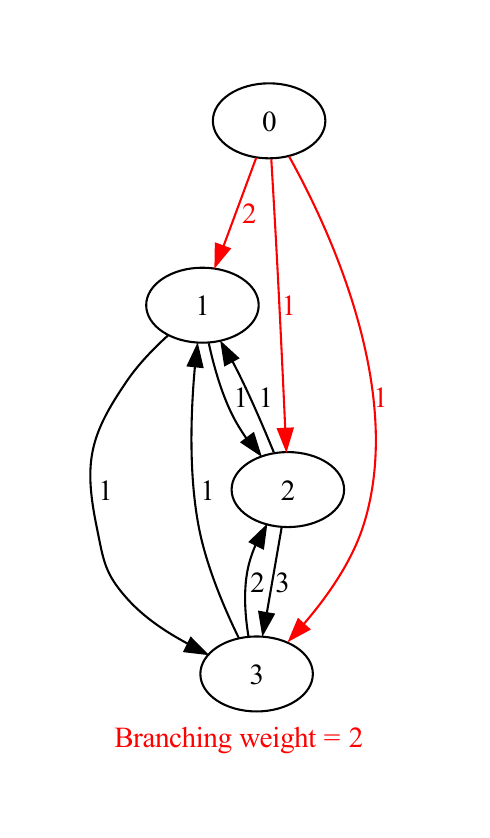} \subcaption{}
    \end{subfigure}
    \hfill
    \begin{subfigure}[t]{0.2\textwidth}
        \centering
        \includegraphics[width=\linewidth]{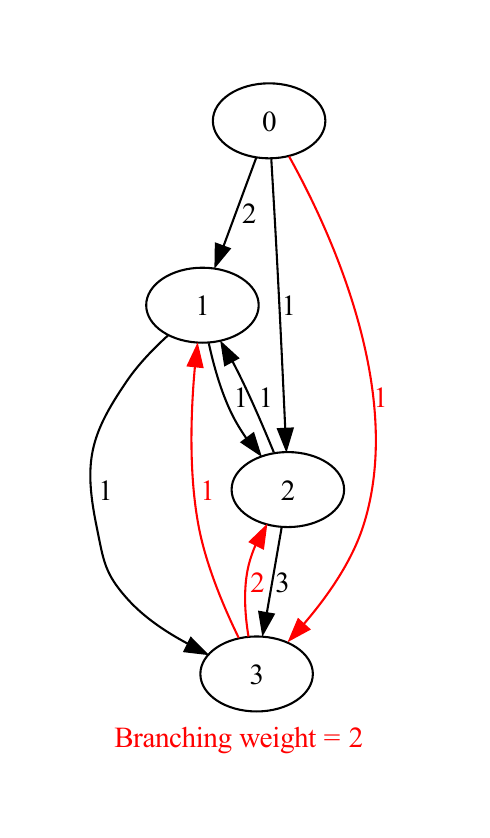} 
        \subcaption{}
    \end{subfigure}
    \hfill
    \begin{subfigure}[t]{0.2\textwidth}
        \centering
        \includegraphics[width=\linewidth]{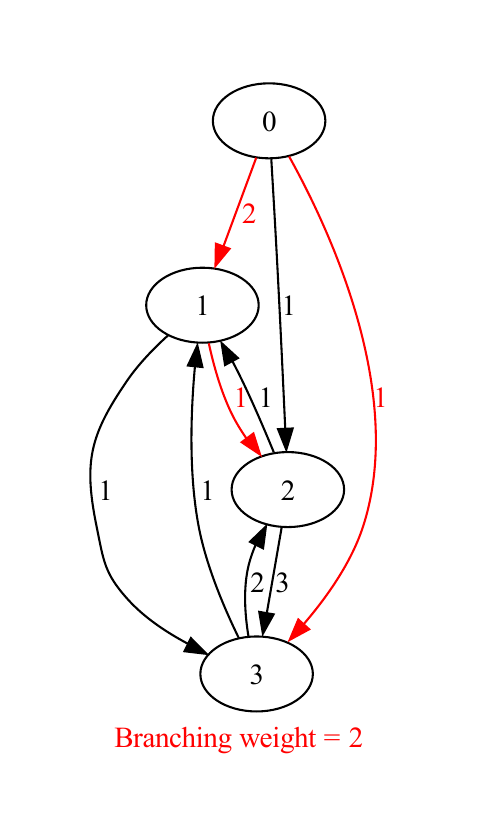} 
        \subcaption{}
    \end{subfigure}
    \hfill
    \begin{subfigure}[t]{0.2\textwidth}
        \centering
        \includegraphics[width=\linewidth]{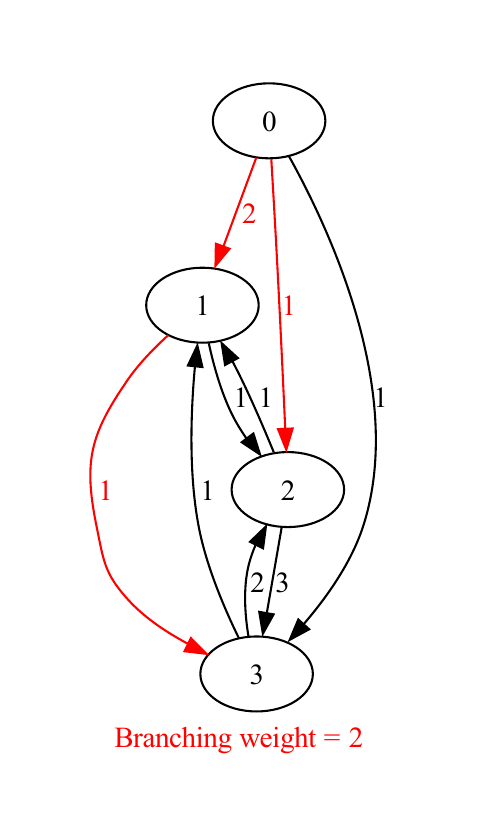} 
        \subcaption{}
    \end{subfigure}
    \begin{subfigure}[t]{0.2\textwidth}
        \centering
        \includegraphics[width=\linewidth]{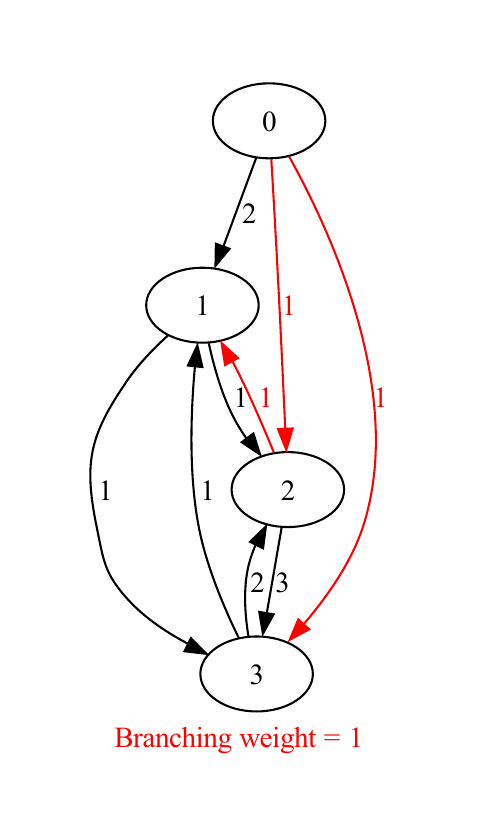} \subcaption{}
    \end{subfigure}
    \hfill
    \begin{subfigure}[t]{0.2\textwidth}
        \centering
        \includegraphics[width=\linewidth]{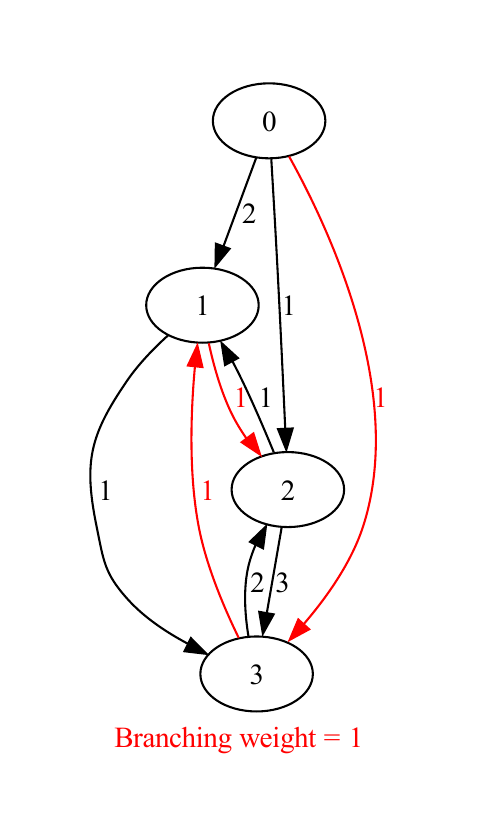} 
        \subcaption{}
    \end{subfigure}
    \hfill
    \begin{subfigure}[t]{0.2\textwidth}
        \centering
        \includegraphics[width=\linewidth]{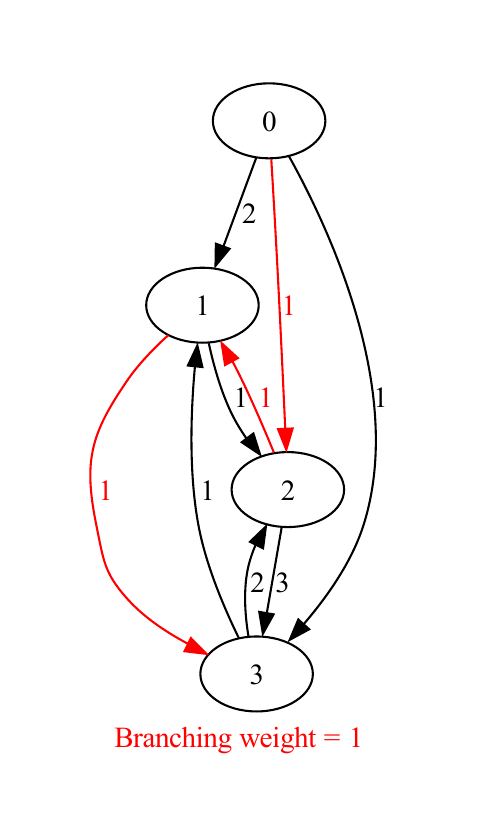} 
        \subcaption{}
    \end{subfigure}
    \hfill
    \begin{subfigure}[t]{0.2\textwidth}
        \centering
        \includegraphics[width=\linewidth]{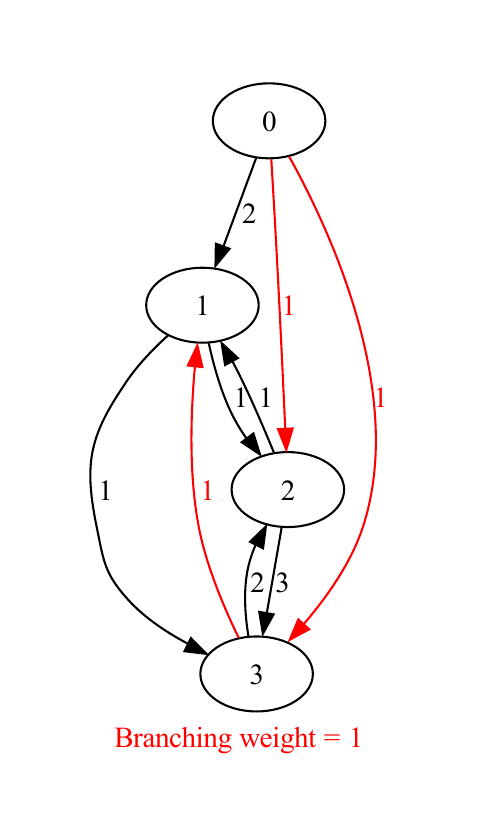} 
        \subcaption{}
    \end{subfigure}
\caption{The sixteen branchings for the matrix digraph in Fig. \ref{fig:3digraph}.}
\label{fig:branchings}
\end{figure}

\section {Reduced-Matrix-Tree Theorem}
\label{sec:reduced_matrices}

Chen \cite{chen2012applied} and Chaiken \cite{chaikenSeth} provide a generalization of the matrix-tree theorem to additional minors of the original zero-column-sum matrix.  We provide our own treatment of this problem but instead consider {\em reduced} versions of a general matrix.

\begin{defn}
Let $P = \{p_1, p_2, ..., p_m\}$ and
$Q = \{q_1, q_2, ..., q_m\}$ be two nonempty subsets
of the set of integers $\{1, 2, ..., n\}$ such that $m \leq n$.
Let $M$ be an $n \times n$ matrix with elements $M_{i,j}$.
The {\it reduced matrix} $M^{P, Q}$ is the matrix derived from $M$ with
matrix elements $[M^{P, Q}]_{i, j} = \delta_{i,p_k} \delta_{j,q_k}$ for
$j \in Q$ and
$[M^{P, Q}]_{i, j} = M_{i, j}$ for $j \notin Q$.
The matrix $M^{P, Q}$ is thus the matrix $M$ with elements in
column $q_k \in $ in $Q$ replaced by zeros except for the row given
by $p_k$, which has the value 1.
\label{defn:reducedA}
\end{defn}

The set $Q$ may contain elements $q_k \in P$ that are not
in the same position in $Q$ as in $P$; that is, $q_k = p_j$ with $k \ne j$.
One may pairwise exchange elements of $Q$ to form a set
$\tilde{Q} = \{\tilde{q}_1, \tilde{q}_2, ..., \tilde{q}_m\}$ such that
$\tilde{q}_k = p_k$ for each element $q_k \in P$.
Let $N_Q$ be
the minimum number of pairwise exchanges needed to convert $Q$ into $\tilde{Q}$.

Now it is possible to compute the determinant of a reduced matrix
in terms of weighted sums of arborescences.  In what follows, vertices of
an $n \times n$
matrix digraph will be referred to by their label (an element of the
set $\{1, 2, ..., n\}$ and the numbers $p_k \in P$ and
$\tilde{q}_k \in \tilde{Q}$),
and the root matrix of the digraph will be the vertex $0$.

\begin{theorem}
Consider the matrix $A$ in Eq. (\ref{eq:aij}).
\begin{equation}
det(A^{P, Q}) = \left(-1\right)^{N_Q} \sum_{B: \{(\tilde{q_k})_1 \to p_j\}}
\epsilon(B) W(B)
\label{eq:detapq}
\end{equation}
where $B$ is an arborescence in the matrix digraph of
$A$ except that $\{(\tilde{q_k})_1 \to p_j\}$
indicates $B$ is rooted at each vertex
$\tilde{q}_k \in \tilde{Q}$ with weight 1 and with a path to some
vertex $p_j \in P$, $W(B)$ is the weight of arborescence $B$ such that $W(B) = \prod_{e \in B} w(e)$, and where $\epsilon(B)$ is a factor $\pm 1$, depending
on the cycles in a subgraph that can be derived from $B$.
\label{theorem:reduced}
\end{theorem}

\begin{proof}
Rearranging the set $Q$ to $\tilde{Q}$ requires $N_Q$
exchanges of ${q_k}'s$.
Since each exchange of $q_k$'s
corresponds to an exchange of columns in $A^{P, Q}$, which, in turn, changes
the sign of the resulting determinant,
\begin{equation}
det(A^{P, Q}) = \left(-1\right)^{N_Q} det(A^{P, \tilde{Q}})
\label{eq:detapqp}
\end{equation}

$A^{P, \tilde{Q}}$ has all zeros in each column $\tilde{q}_k \in \tilde{Q}$
except a 1 in row
$p_k$ for column $\tilde{q}_k$.
If $p_k = \tilde{q}_k$, the matrix digraph for $A^{P, \tilde{Q}}$
has an arc $(0, \tilde{q}_k)$ with weight 1 as the only in arc
to the vertex $\tilde{q}_k$.  Since $p_k = \tilde{q}_k$, $p_k$ and
$\tilde{q}_k$ are the same vertex.  Nevertheless, in this case we consider
there to be a path $\tilde{q}_k \to p_k$.
Let $R \subseteq \tilde{Q}$ be the subset of vertices for which
$\tilde{q}_k = p_k$ for $\tilde{q}_k \in \tilde{Q}$ and $p_k \in P$.

If $p_k \ne \tilde{q}_k$, the vertex $\tilde{q}_k$
will have two in arcs in the $A^{P, \tilde{Q}}$ matrix digraph.
The first will be an arc $(p_k, \tilde{q}_k)$ with weight -1.
Since the diagonal element in $A^{P, \tilde{Q}}$ 
in column $\tilde{q}_k$ and row $\tilde{q}_k$ for this case will be zero,
the second arc is $(0, \tilde{q}_k)$ with weight 1 to cancel out the
weight of the first in arc.
Let $S \subseteq \tilde{Q}$ be the subset of vertices for which
$\tilde{q}_k \ne p_k$ for $\tilde{q}_k \in \tilde{Q}$ and $p_k \in P$.

All arborescences in the matrix digraph of $A^{P, \tilde{Q}}$ must have
either only the arc $(0, \tilde{q}_k)$ if $\tilde{q}_k \in R$ or either
the arc $(0, \tilde{q}_k)$ or $(p_k, \tilde{q}_k)$
if $\tilde{q}_k \in S$.  This means that all arborescences
in the matrix digraph of $A^{P, \tilde{Q}}$ can be derived from an arborescence
rooted at each $\tilde{q}_k \in \tilde{Q}$ by replacing one or more
of the arcs in $(0, \tilde{q}_k)$ for $\tilde{q}_k \in S$ by their
complement arcs $(p_k, \tilde{q}_k)$, as long as the resulting subgraph
does not contain one or more cycles, in which case the subgraph would not
be an arborescence.

Consider an arborescence $B$ in the matrix digraph of $A^{P, \tilde{Q}}$ that
is rooted at each vertex $\tilde{q}_k \in \tilde{Q}$.  Suppose further that
for one of these root vertices $\tilde{q}_j$ there is no path to any
$p_\ell \in P$.  The vertex $p_j$ is thus part of a path from some other
rooted vertex of $B$ to $p_j$.  There is one and only one arborescence
 $B'$ in the
sum over arborescences of the matrix digraph of $A^{P, \tilde{Q}}$ that is
identical to $B$ except that it has the arc $(p_j, \tilde{q}_j)$ with weight
-1 in place of the arc $(0, \tilde{q}_j)$ with weight 1; thus,
$W(B') = -W(B)$, and these arborescences will cancel in the sum over
arborescences.
Since $\vert \tilde{Q} \vert = \vert P \vert$, for an arborescenc $B$ that
is rooted at each vertex $\tilde{q}_k \in \tilde{Q}$, there must
be a path to one and only one vertex $p_j \in P$ from $\tilde{q}_k$.
Recall that we consider
there to be a path $\tilde{q}_k \to p_k$ when $\tilde{q}_k = p_k$.

Now consider an arborescence
$B$ rooted at each vertex $\tilde{q}_k \in \tilde{Q}$
and with a path to one and only one vertex $p_j \in P$.
Consider the subgraphs derived from {\em parent} arborescence
$B$ by replacing one or more
arcs $(0, \tilde{q}_k)$ for $\tilde{q}_k \in S$ by $(p_k, \tilde{q}_k)$.
These subgraphs are arborescences
as long as the replacements do not lead to a cycle or cycles.
Such a cycle $C$ would
be $\tilde{q}_\ell \to ... \to p_j \to \tilde{q}_j \to ... \to p_\ell
\to \tilde{q}_\ell$
and would
result from replacing all $(0, \tilde{q}_k)$ arcs with $(p_k, \tilde{q}_k)$
arcs for each $\tilde{q}_k$ in the cycle.
Let the number of vertices $\tilde{q}_k$ in $C$ be $N_C$.
The sum over arborescence weights derived from parent arborescence
$B$ would be
\begin{equation}
\Sigma(B) = W(B) \prod_{C \in \{C\}_B}
\left(\sum_{r = 0}^{N_C - 1} \binom{N_C}{k}
\left(-1\right)^r 1^{N_C - i}\right)
\label{eq:SigmaB}
\end{equation}
where $C$ is any cycle that can be present among a subset of $\tilde{q}_k$
vertices in $B$ upon replacement of all in arcs to the vertices with
arcs $(p_k, \tilde{q}_k)$ and where $\{C\}_B$ is the set of all such
cycles that can be derived from $B$.  $\Sigma(B)$ may be written
\begin{equation}
\Sigma(B) = W(B) \prod_{C \in \{C\}_B}\left([1 - 1]^{N_C} - \left(-1\right)^{N_C} \right)
= W(B) \prod_{C \in \{C\}_B}\left(-1\right)^{N_C - 1}
\label{eq:SigmaB2}
\end{equation}
Since all arborescences
in the matrix digraph of $A^{P, \tilde{Q}}$ can be derived
from arborescence $B$ rooted at each vertex $\tilde{q}_k \in \tilde{Q}$ with
a path to one and only one $p_j \in P$,
\begin{equation}
\det\left(A^{P, \tilde{Q}}\right) = \sum_{B:\{(q_k)_1 \to p_j\}} \Sigma(B)
\label{eq:detaqp2}
\end{equation}
The arborescences $B$ include only arcs from the digraph of matrix $A$, except
for the root arcs to vertices $\tilde{q}_k$, which have weight 1, so
combination of Eqs. (\ref{eq:detapqp}), (\ref{eq:SigmaB2}),
and (\ref{eq:detaqp2}) with
\begin{equation}
\epsilon(B) = \prod_{C \in \{C\}_B} \left(-1\right)^{\vert C \vert - 1}
\label{eq:epsilonB}
\end{equation}
completes the proof.
\end{proof}

\begin{remark}
Since the weights of the arcs $(0, \tilde{q}_k)$ in the arborescences $B$
in Eq. (\ref{eq:detapqp}) are all 1, those arc weights do not contribute
to $W(B)$.  This means that those arcs can be removed without changing
the sum over arborescence weights and thus that
each arborescence $B$ in Eq. (\ref{eq:detapqp})
can be viewed as the union of an arborescence with root 0
and rooted at vertices not in $\tilde{Q}$ and
a forest of arborescences each with root $\tilde{q}_k \in \tilde{Q}$ and
containing only one vertex $p_j \in P$.  The factor $\epsilon(B)$ can
be computed in this case by computing cycles by connecting the vertex $p_j$
in the arborescence with root $\tilde{q}_k$ to the arborescence with root
$\tilde{q}_j$ via the arc $(p_j, \tilde{q}_j)$ with weight -1.  Through
such operations, a cycle $C$ results when $\vert C \vert$ separate arborescences have
been combined such that no vertex in the resulting subgraph has indegree
zero.  From the set of cycles $\{C\}_B$ one derives in this way from
parent $B$, Eq. (\ref{eq:epsilonB}) yields  $\epsilon(B)$.  Of course, if the original matrix $A$ is a zero-column-sum matrix ($v_{ii} = 0$ for all $i$), then there is no arborescence rooted at root vertex 0, and the result is a sum over a forest of directed trees, each rooted at a vertex in $Q$, as discussed in Chaiken \cite{chaikenSeth} and Moon \cite{MOON1994163}, except that those treatments include a factor $(-1)^{\sum_{k=1}^m (p_k + q_k)}$ to account for the fact that the minors have rows in $P$ and columns in $Q$ struck relative to our reduced matrix.
\label{remark:minor}
\end{remark}

\begin{remark}
A modified reduced matrix may have more than one 1 in a column in which the other entries are zero.  Such a case may be handled by considering that matrix as the sum of one or more matrices.  The determinant will be the sum of determinants of matrices, each a reduced matrix as we have defined it, derived from different permutations of the rows or columns of the matrices that sum to give the modified reduced matrix.
\end{remark}

\begin{remark}
    By Remark \ref{remark:minor}, the $j,i$ cofactor $M_{ji}$ of a matrix $A$ is the determinant of the reduced matrix of $A$ with zeros in column $i$ except a 1 in row $j$; that is, $M_{ji} = det(A^{j,i})$.  The cofactor matrix $M$ for matrix $A$ is the matrix with elements $M_{ji}$, the cofactors of $A$.  The $i,j$ element of $A^{-1}$, the inverse of $A$, if $A$ is invertible, is the adjugate matrix $adj(A)$ of $A$ divided by the determinant; that is, $A^{-1} = adj(A) / det(A)$.  Since the adjugate matrix is the transpose of the cofactor matrix, $(A^{-1})_{ij} = det(A^{j,i})/det(A)$.  Then, by Theorems \ref{theorem:detDA} and \ref{theorem:reduced}, the $i,j$ element of the inverse of $A$ is the sum of the weights of all arborescences in the matrix digraph of $A$ rooted at $i$ with a path from $i$ to $j$ divided by the sum of all arborescence weights.
    \label{remark:inverse}
\end{remark}

The inverse of the matrix in Eq. (\ref{eq:A}) is
\begin{equation}
    A^{-1} = \frac{1}{42}.\begin{pmatrix}
14 & 7 & 7\\
 8 & 19 & 13 \\
6  & 9 & 15
\end{pmatrix}
\label{eq:Ai}
\end{equation}

We may understand this result in light of Remark \ref{remark:inverse}.
To get the term corresponding to $A^{-1}_{11}$, we note that this element can be obtained from the summation of all arborescences that are rooted at vertex $1$ (any arborescence rooted at 1 has a path to 1). These arborescences are Figs. \ref{fig:branchings}(a), \ref{fig:branchings}(b), \ref{fig:branchings}(c), \ref{fig:branchings}(d), \ref{fig:branchings}(h), \ref{fig:branchings}(i), \ref{fig:branchings}(k) and \ref{fig:branchings}(l).  The sum of the weights of these aborescences is 28.  Recall, however, that the arc $(0, 1)$ in these arborescences must be replaced by one with weight 1.  Since the original $(0, 1)$ arc had weight 2, this halves the sum to give 14, as expected.  To compute $A^{-1}_{11}$, we divide this result by 42, the determinant of $A$, which we computed from the sum of all arborescence weights in Fig. \ref{fig:branchings}.

Similarly, we can find the element $A^{-1}_{32}$ in Eq. (\ref{eq:A}). This arises from the sum over weights of arborescences that are rooted at vertex $3$ with a path from vertex $3$ to vertex $2$ as shown in figure \ref{fig:branchings}(d),  \ref{fig:branchings}(g), \ref{fig:branchings}(j) and \ref{fig:branchings}(n).  The sum of the weights of these arborescences is 9, as expected, since the weight of the $(0, 3)$ arc is already 1.

Another example that demonstrates the aspects of the reduced-matrix tree theorem is shown below. Let us consider a $4 \times 4$ matrix  

\begin{equation}
    C = \begin{pmatrix}
5.71 & -1.61 & -1.45 & -1.34\\
-1.36 & 6.82 & -2.15 & -1.78\\
-1.52 & -2.43 & 10.63 & -2.91\\
-2.91 & -1.27 & -3.69 & 7.33
\end{pmatrix}
\label{eq:Mi}
\end{equation}

We can find the reduced matrix $C^{\{1,2,3\},\{2,1,4\}}$.
\begin{equation}
    C^{\{1,2,3\},\{2,1,4\}} = \begin{pmatrix}
0 & 1 & -1.45 & 0\\
1 & 0 & -2.15 & 0\\
0 & 0 & 10.63 & 1\\
0 & 0 & -3.69 & 0
\end{pmatrix}
\label{eq:M_cof}
\end{equation}
Here $P = \{1,2,3\}$ and $Q = \{2,1,4\}$.  We may swap columns 1 and 2 to get ${\tilde Q} = \{1,2,4\}$.  
\begin{equation}
    C^{\{1,2,3\},\{1,2,4\}} = \begin{pmatrix}
1 & 0 & -1.45 & 0\\
0 & 1 & -2.15 & 0\\
0 & 0 & 10.63 & 1\\
0 & 0 & -3.69 & 0
\end{pmatrix}
\label{eq:M_cof2}
\end{equation}
Then $det(C^{\{1,2,3\}, \{2,1,4\}}) = -det(C^{\{1,2,3\},\{1,2,4\}})$.  The seven arborescences in the matrix digraph corresponding to $C^{\{1,2,3\},\{1,2,4\}}$ are shown in Fig. \ref{fig:branchings2}.  The weight of any arborescence without a path from vertex 4 to vertex 3 is explicitly canceled by the weight of another such arborescence.  The only surviving arborescence is the one rooted at vertices 1, 2, and 4 and with a path from vertex 4 to vertex 3, as required by Theorem \ref{theorem:reduced}.  This has weight 3.69, so, by multiplying by -1 from the column swapping, $det(C^{\{1,2,3\},\{2,1,4\}}) = -3.69$.

\begin{figure}
    \centering
    \begin{subfigure}[t]{0.3\textwidth}
        \centering
        \includegraphics[width=\linewidth]{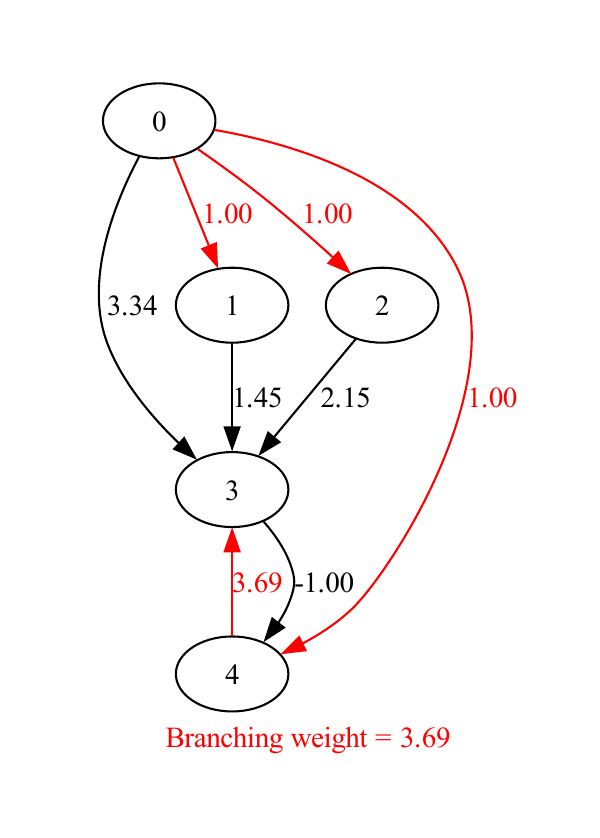} \subcaption{}
    \end{subfigure}
    \hfill
    \begin{subfigure}[t]{0.3\textwidth}
        \centering
        \includegraphics[width=\linewidth]{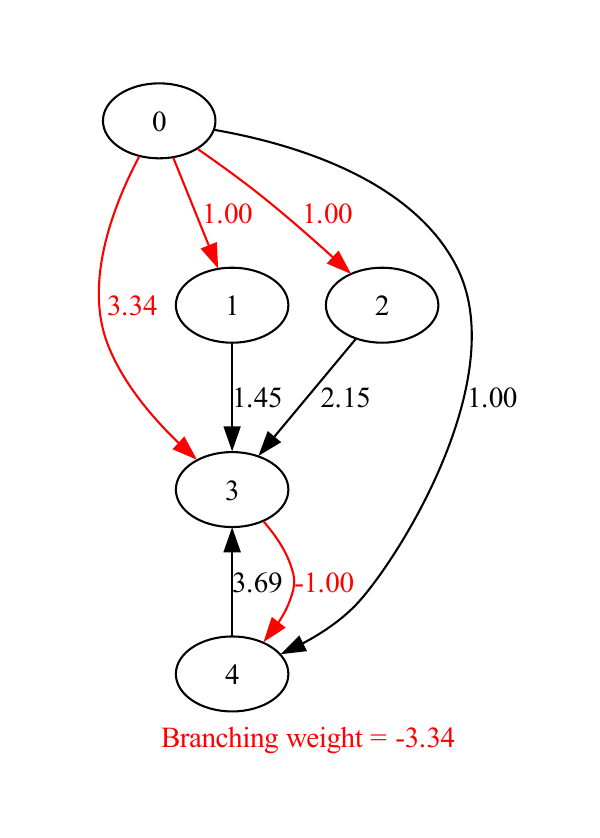} 
        \subcaption{}
    \end{subfigure}
    \hfill
    \begin{subfigure}[t]{0.3\textwidth}
        \centering
        \includegraphics[width=\linewidth]{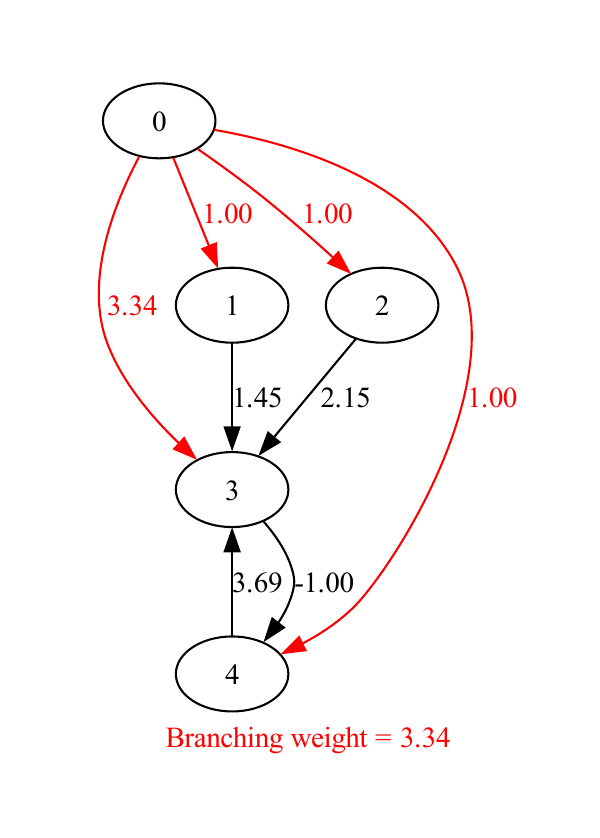} 
        \subcaption{}
    \end{subfigure}
    \hfill
    \begin{subfigure}[t]{0.3\textwidth}
        \centering
        \includegraphics[width=\linewidth]{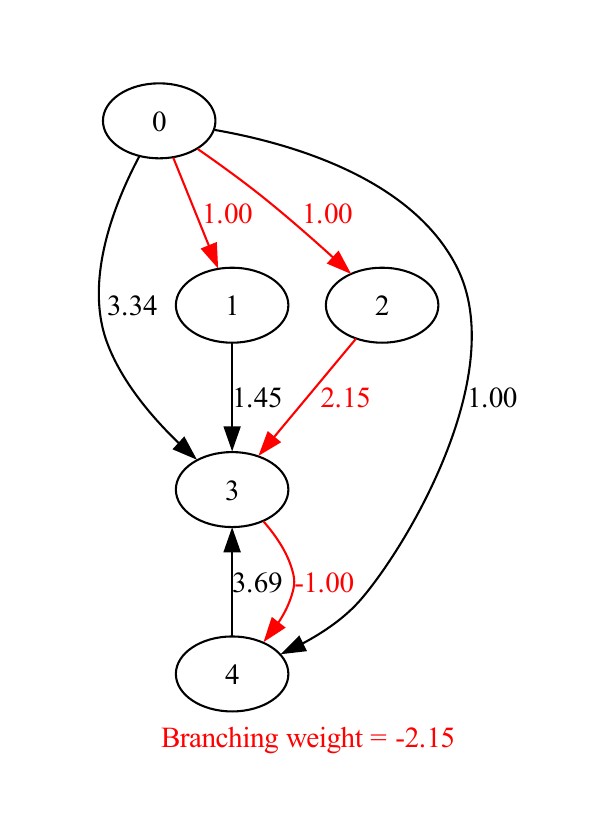} 
        \subcaption{}
    \end{subfigure}
        \begin{subfigure}[t]{0.3\textwidth}
        \centering
        \includegraphics[width=\linewidth]{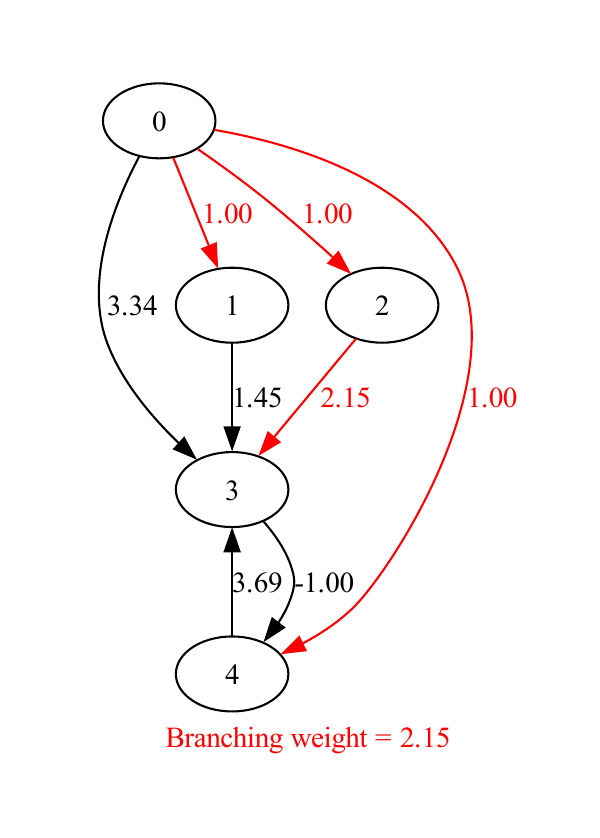} 
        \subcaption{}
    \end{subfigure}
        \begin{subfigure}[t]{0.3\textwidth}
        \centering
        \includegraphics[width=\linewidth]{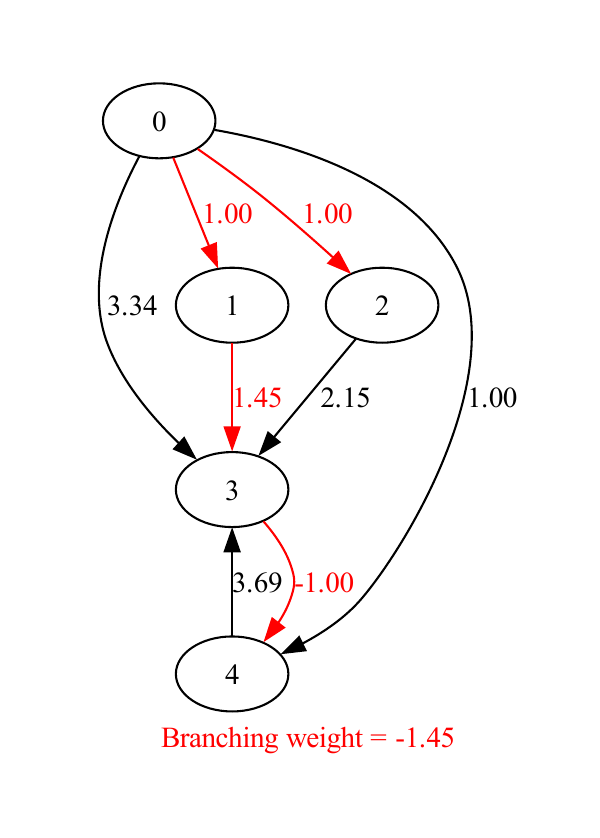} 
        \subcaption{}
    \end{subfigure}
        \begin{subfigure}[t]{0.3\textwidth}
        \centering
        \includegraphics[width=\linewidth]{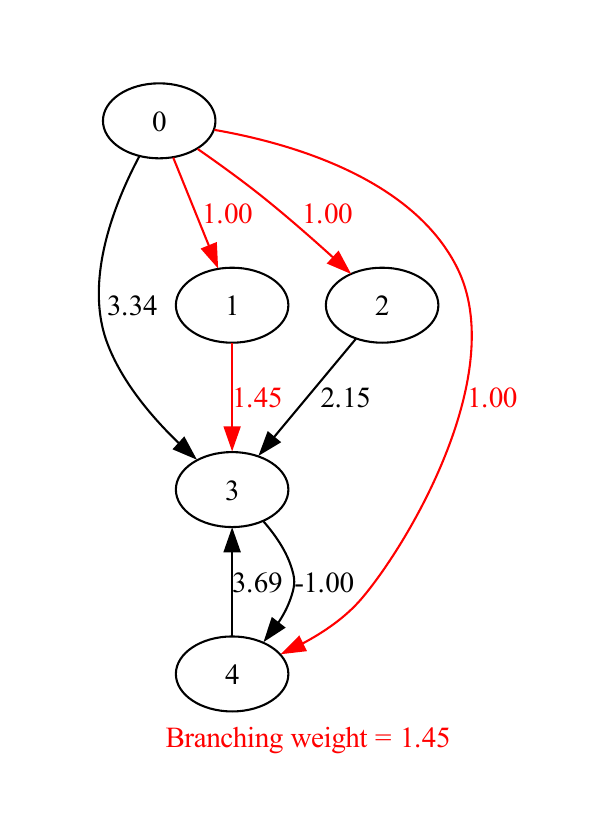} 
        \subcaption{}
    \end{subfigure}
\caption{Arborescences in the calculation of the determinant of the matrix in Eq. (\ref{eq:M_cof2}).}
\label{fig:branchings2}
\end{figure}

\section{An Illustrative Application} \label{sec:application}

In this section, we illustrate a practical use of Theorems \ref{theorem:detDA} and \ref{theorem:reduced} by applying them to the time-evolution of probabilities in a system with $n$ discrete states.  An example would be the evolution of population probabilities among excited levels in an ensemble of identical atoms.  The Theorems provide, in principle, a way of computing the system's evolution but, perhaps more importantly, a way of interpreting that evolution.

We take that the probability of the system to be in state $i$ is given by $X_i$.  We consider the rate of transition from state $i$ to state $j$ to be independent of any of the $X$'s and to be given by $\lambda_{ij}$, with $\lambda_{ij} \ge 0$ for all $i,j$.  The rate of change of probabilities is then given by
\begin{equation}
\frac{dX}{dt} = -\Lambda X
\label{eq:dXdt}
\end{equation}
where $X$ is the vector of probabilities $X_i$ and the elements of the matrix $\Lambda$ are given by
\begin{equation}
\Lambda_{ij} = 
\begin{cases}
-\lambda_{ji}, & i\ne j, 1 \leq i, j \leq n\\
 \sum_{k = 1, k\ne i}^n \lambda_{jk}, & i = j, 1 \leq i \leq n
\end{cases}
\label{eq:Lambda_elements}
\end{equation}

\begin{remark}
The matrix digraph $\Gamma_\Lambda$ corresponding to $\Lambda$ has arc $(i,j)$ between vertex $i$ and vertex $j$ with weight $\lambda_{ji}$.  Because the column sum for each column in $\Lambda$ is zero, there are no arcs $(0, i)$, that is, no arcs from the root vertex $0$ to any other vertex; therefore, $det(\Lambda) = 0$ and $\Lambda$ is singular.
\label{remark:Lambda}
\end{remark}

Matrix exponentiation techniques are available to solve Eq. (\ref{eq:dXdt}) when the rates $\lambda_{ij}$ are constant in time (e. g., \cite{doi:10.1137/09074721X}), but here we consider implicit Euler finite differencing in time.  This is especially useful if the $\lambda_{ij}$'s are time-dependent (for example, the temperature of the system, upon which the rates $\lambda_{ij}$ depend, is varying in time).  In such a case, Eq. (\ref{eq:dXdt}) becomes
\begin{equation}
    \frac{X(t + \Delta t) - X(t)}{\Delta t} = -\Lambda X(t + \Delta t)
    \label{eq:finite}
\end{equation}
In this approach, the system evolves over a finite timestep $\Delta t$ from $t$ to $t + \Delta t$.  The finite derivative is computed at $t + \Delta t$ in keeping with the implicit approach, which is appropriate for stiff systems of coupled equations (e.g., \cite{2006NuPhA.777..188H}).  This yields the matrix equation
\begin{equation}
    \left(I + \Lambda \Delta t\right)X(t + \Delta t) = X(t)
    \label{eq:finite_mat}
\end{equation}
where $I$ is the $n \times n$ identity matrix.
We let
\begin{equation}
    {\cal M} = I + \Lambda \Delta t
    \label{eq:cal_M}
\end{equation}
The solution to Eq. (\ref{eq:finite_mat}) is then
\begin{equation}
    X(t + \Delta t) = {\cal M}^{-1} X(t)
    \label{eq:finite_sol}
\end{equation} 
The probability $X_i(t + \Delta t)$ is
\begin{equation}
    X_i(t + \Delta t) = \sum_j {\cal M}_{ij}^{-1} X_j(t)
    \label{eq:X_tdt}
\end{equation}

Suppose $\Gamma_{\cal M}$ is the matrix digraph corresponding to ${\cal M}$.  It has arcs $(i,j)$ with weight $\lambda_{ji} \Delta t$ for $i, j \ne 0$.  In addition, there is an arc $(0, i)$ to each vertex $i$ with weight 1 arising from the identity matrix part of ${\cal M}$.  By Remark \ref{remark:inverse}, ${\cal M}_{ij}^{-1}$ is given by the sum of weights of all arborescences in $\Gamma_{\cal M}$ with arc $(0, i)$ and a path from vertex $i$ to vertex $j$ divided by the sum of weights of all arborescences in $\Gamma_{\cal M}$.  By taking account of Remark \ref{remark:minor}, we find
\begin{equation}
    {\cal M}_{ij}^{-1} = \frac{\delta_{ij} + \sum_{m=1}^{n-1} b_m^{(i)\to j} \Delta t^m}{1 + \sum_{m=1}^{n-1} b_m \Delta t^m}
    \label{eq:mij_dt}
\end{equation}
where $\delta_{ij}$ is the Kronecker delta, $b_m$ is the sum over weights of branchings in $\Gamma_\Lambda$ with $m$ arcs, and $b_m^{(i)\to j}$ is the sum over weights of branchings in $\Gamma_\Lambda$ that are rooted at vertex $i$ and have a path to vertex $j$.

\begin{remark}
   Every arborescence in $\Gamma_{\cal M}$ in the above discussion must have an in arc to vertex $j$, since it is not the root vertex.  One may trace back from vertex $j$ along predecessor in arcs in the arborescence to one and only one rooted vertex $i$.  Thus, each arborescence in $\Gamma_{\cal M}$ has exactly one path from vertex $i$ to vertex $j$.  Thus, by summing over a column of ${\cal M}^{-1}$, one sums over all possible rooted vertices with a path to vertex $j$ and thus all possible arborescences and thereby obtains a column sum 1 for each column.  Each element of ${\cal M}^{-1}$ is non-negative (since all rates $\lambda_{ij}$ and $\Delta t$ are non-negative) and is less than or equal to 1 since it is a fractional weight of arborescences.  ${\cal M}^{-1}$ is therefore a column-stochastic Markov matrix.
    \label{remark:markov}
\end{remark}
The Markov matrix ${\cal M}^{-1}$ in Eq. (\ref{eq:finite_sol}) or (\ref{eq:mij_dt}) gives the transition of probabilities of states at time $t$ to those at time $t + \Delta t$, and the fractional weight of arborescences in $\Gamma_{\cal M}$ with a path from vertex $i$ to vertex $j$ gives the ``flow'' of probability from state $j$ to state $i$ from $t$ to $t + \Delta t$.

If the rates $\lambda_{ij}$ are constant in time, the system will evolve over long time to a fixed point (an equilibrium).

\begin{proposition}
    Let $\Gamma_\Lambda$ be the matrix digraph in Remark \ref{remark:Lambda}.  Let $\Gamma_\Lambda^{(k)}$ be graph derived from $\Gamma_\Lambda$ by first removing all in arcs to vertex $k$ and then adding an arc $(0, k)$ with weight 1. The equilibrium probability of state $i$ is given by
    \begin{equation}
    X_i^{(eq)} = \frac{\sum_{B \in {\cal A}(\Gamma_\Lambda^{(i)})}W(B)}{\sum_k \sum_{B \in {\cal A}(\Gamma_\Lambda^{(k)})} W(B)}
    \end{equation}
    where ${\cal A}(\Gamma_\Lambda^{(k)})$ denotes the set of all arborescences in digraph $\Gamma_\Lambda^{(k)}$ and $W(B)$ is the weight of arborescence $B$.
    \label{prop:equil}
\end{proposition}
\begin{proof}
The equilibrium occurs when
\begin{equation}
    \Lambda X = 0
    \label{eq:equil}
\end{equation}
This equation is difficult to solve directly since $\Lambda$ is singular; nevertheless, there is a constraint one needs to apply, namely, that the sum of the probabilities should be unity.  Consider the $n \times n$ matrix $U$ defined as
\begin{equation}
U_{ij} = 
\begin{cases}
1, & i = 1, \forall j\\
0, & 1 < i <= n, \forall j
\end{cases}
\label{eq:U_elements}
\end{equation}
One now finds
\begin{equation}
    \left(\Lambda + U\right)X = V
    \label{eq:U_equil}
\end{equation}
where $V$ is a column vector with $V_1 = 1$ and $V_k = 0$ for $1 < k \leq n$.  The equilibrium probabilities are then given by $\left(\Lambda + U\right)^{-1} V$.

The determinant of $\Lambda + U$ is 
the sum of determinants of all matrices formed from permutations of columns of $\Lambda$ and $U$.  $\Lambda$ itself is singular, so it does not contribute to the sum.  Furthermore, if we include more than one column of $U$ in the permutation, the determinant of the resulting matrix will be zero since there will be two columns that are multiples of each other.  The only permutations contributing to $det(\Lambda + U)$, then, will have exactly one column from $U$ and the remaining columns from $\Lambda$.  This means
\begin{equation}
    det(\Lambda + U) = \sum_{k=1}^n det(\Lambda^{\{1\},\{k\}})
    \label{eq:detLamU}
\end{equation}
that is, the sum over determinants of reduced matrices $\Lambda^{\{1\},\{k\}}$.  By Theorem \ref{theorem:reduced}, $det(\Lambda^{\{1\}, \{k\}}$ is the sum over all arborescences of the matrix digraph $\Gamma_\Lambda$ that must also have an arc $(0, k)$ with weight 1 and a path from vertex $k$ to vertex $1$.  The matrix digraph $\Gamma_\Lambda$ that must also include the arc $(0, k)$ is $\Gamma_\Lambda^{(k)}$.  Since there are no other root arcs in $\Gamma_\Lambda^{(k)}$, the contributing arborescences to $det(\Lambda^{\{1\},\{k\}})$ must be single-root arborescences, which necessarily have a path to vertex 1. As a result, we may write
\begin{equation}
    det(\Lambda^{\{1\},\{k\}}) = \sum_{B\in {\cal A}(\Gamma_\Lambda^{(k)})} W(B)
    \label{eq:detLamUk}
\end{equation}
where ${\cal A}(\Gamma_\Lambda^{(k)})$ is the set of all arborescences in $\Gamma_\Lambda^{(k)}$ and $W(B)$ is the weight of arborescence $B$.

Since the only non-zero element of vector $V$ in Eq. (\ref{eq:U_equil}) is the first, which has value 1, 
\begin{equation}
    X_i^{(eq)} = \left(\Lambda + U\right)_{i,1}^{-1}
\end{equation}By Remark \ref{remark:inverse},
\begin{equation}
    \left(\Lambda + U\right)_{k,1}^{-1} = \frac{det(\Lambda^{\{1\},\{k\}})}{det(\Lambda + U)}
    \label{eq:X_sol_eq}
\end{equation}
Substitution of Eqs. (\ref{eq:detLamU}) and (\ref{eq:detLamUk}) into Eq. (\ref{eq:X_sol_eq}) completes the proof.
\end{proof}
\begin{remark}
The equilibrium probability of state $i$ in the system modeled by Eq. (\ref{eq:dXdt}) with constant rates is the fractional weight of arborescences in $\Gamma_\Lambda$ with vertex $i$ as the root.  The same result may be inferred from Eq. (\ref{eq:mij_dt}) by considering the limit $\Delta t \to \infty$.    As $\Delta t \to \infty$, the $\Delta t^{n-1}$ term dominates, and ${\cal M}_{ij}^{-1}$ becomes the ratio of sums of single-root arborescence weights rooted at $i$ divided by the sum of all single-root arborscence weights; that is, ${\cal M}_{ij}^{-1} \to X_i^{(eq)}$.
\end{remark}

The matrix $U$ in the proof of Proposition \ref{prop:equil} implements the constraint on the sum of probabilities.  Other constraints are possible, which could be accommodated by additional constraint matrices.   For example, there may be one or more rates that are slow compared to all other transition rates.  In this case, a subset or ``cluster'' of states in the system might be equilibrated with each other but not with the remaining states.  The sum of probabilities in the states in the cluster would be $0 \leq \tilde{X} \leq 1$.  In element formation theory, this has been termed a ``quasi-equilibrium'' (e.g., \cite{1998ApJ...498..808M}).  It represents another constraint on the probabilities, and it can be accommodated, for example, by adding another matrix $W$ to $\Lambda + U$ such that $W$ has all zeros except for 1's in row 2 and in a subset of columns corresponding to the subset of states in the equilibrium cluster.  The vector $V$ then would have a 1 in row 1 and $\tilde{X}$ in row two with zeros elsewhere.  The determinant would be computed from the sum of matrices made up of permutations of columns from $\Lambda$, $U$, and $W$.  These would be a number of reduced matrices of $\Lambda$, and one would apply Theorem \ref{theorem:reduced} to compute the determinants and the inverse matrix elements.

\section{Calculation of Determinants}

By Theorem \ref{theorem:detDA}, the determinant of a matrix $A$ is the sum over weights of all arborescences of the matrix digraph corresponding to $A$.  Related graphical strategies are to compute the determinant from a sum over weights of functional digraphs \cite{MOON1994163} or over the permutation graphs on the $n$ vertices in the digraph related to the matrix \cite{zeilberger1985combinatorial}.  An advantage of the use of arborescences is that well established algorithms exist to compute $k$-th best arborescences \cite{Camerini1980TheKB, sorensen2005}.  These can be used to compute all arborescences or a partial sum over the arborescences that contribute most to the determinant.

Direct calculation of all or most of the arborescences contributing to the determinant of a matrix is limited, however, by the number of arborescences present.  For a simple, complete digraph with $n$
vertices, the underlying graph has $n^{n-2}$ spanning trees \cite{Cayley1889}.  Each spanning tree has $n$ vertices, each of which can serve as the root of an arborescence; thus, a simple, complete digraph with $n$ vertices has $n^{n-1}$ arborescences.  The matrix digraph has $n+1$ vertices since we added a root vertex.  If the digraph is simple and complete except for vertex 0, which only has out arcs, it will have $(n+1)^{n-1}$ arborescences because vertex 0 must be the root of all arborescences.  Even a modest digraph with 9 vertices (excluding the root vertex) will have $10^8$ arborescences.

Another approach is a recursive calculation.  A basic example is provided by a tridiagonal matrix $T$ given by
\begin{equation}
T = \begin{bmatrix}
a_1 & b_1 & 0 & \dots & 0 \\
c_1 & a_2 & b_2 & \ddots & \vdots \\
0 & c_2 & a_3 & \ddots & 0 \\
\vdots & \ddots & \ddots & \ddots & b_{n-1} \\
0 & \dots & 0 & c_{n-1} & a_n
\end{bmatrix}
\label{eq:trid}
\end{equation}
The determinant of the tridiagonal matrix $T$ in Eq. (\ref{eq:trid}) may be computed recursively is via continuants \cite{muir2003treatise}.
The recurrence relation for the continuant $K_{i+1}$ is
\begin{equation}
    K_{i+1} = a_{i+1} K_i - b_i c_i K_{i-1}
    \label{eq:continuant}
\end{equation}
with initial conditions $K_0 = 1$ and $K_{-1} = 0$.  $K_i$ is the determinant of the $i \times i$ sub-matrix of $T$ in Eq. (\ref{eq:trid}) with rows and columns ranging from 1 to $i$.  The determinant of the full matrix is $K_n$.

We seek a related strategy based on Theorems \ref{theorem:detDA} and \ref{theorem:reduced}.
We connect the tridiagonal matrix in Eq. (\ref{eq:trid}) with a matrix digraph by the equations
\begin{equation}
    b_i = -v_{i, i+1},\ \ \ \  1 \leq i < n
    \label{eq:trid_b}
\end{equation}
\begin{equation}
    c_i = -v_{i+1, i},\ \ \ \  1 \leq i < n,
    \label{eq:trid_c}
\end{equation}
and
\begin{equation}
a_i = 
\begin{cases}
v_{1,1} + v_{2,1}, & i = 1\\
v_{n-1, n} + v_{n,n}, & i = n\\
v_{i-1, i} + v_{i,i} + v_{i+1, i}, & 1 < i < n
\end{cases}
\label{eq:trid_ai}
\end{equation}
The resulting matrix digraph is shown in Fig. \ref{fig:trid}

\begin{center}
\begin{figure}[htb]
\centering
    \includegraphics[width=\textwidth]{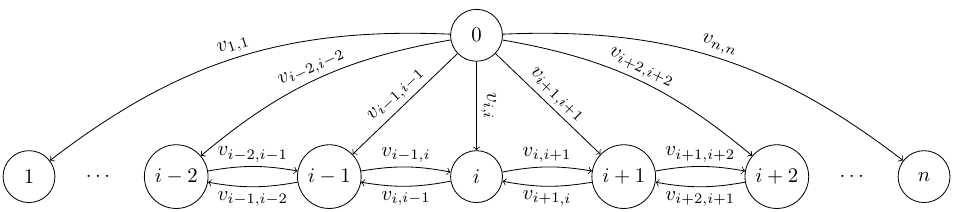}
\caption{Matrix digraph for a tridiagonal matrix of dimension $n$.}
\label{fig:trid}
\end{figure}
\end{center}

\begin{proposition}
    The determinant of the tridiagonal matrix in Eq. (\ref{eq:trid}), with the matrix digraph shown in Fig. \ref{fig:trid} and with arc weights given by the relations in Eqs. (\ref{eq:trid_b})-(\ref{eq:trid_ai}) can be computed recursively by the equations
    \begin{equation}
        D_{i+1} = \left(v_{i+1,i+1} + v_{i,i+1}\right) D_i + v_{i+1, i+1}
        v_{i+1, i} D_{(i)}
        \label{eq:Dip}
    \end{equation}
    and
    \begin{equation}
        D_{(i+1)} = D_i + v_{i+1,i} D_{(i)}
        \label{eq:Drootip}
    \end{equation}
    with $D_1 = v_{1,1}$ and $D_{(1)} = 1$.  The determinant is given by $D_n$.
\end{proposition}
\begin{proof}
    To compute the determinant from the digraph in Fig. \ref{fig:trid}, consider a subgraph $G_i$ including the vertices from 1 to $i < n$ and arcs between those vertices (and root vertex 0).  Now add the vertex $i+1$ to create the graph $G_{i+1}$.  The new arcs will be $(0, i+1)$, $(i, i+1)$, and $(i+1,i)$, as shown in Fig. \ref{fig:trid_i} (we recall that we take the arc $(0, i)$ to have weight $v_{ii}$).  Suppose $D_i$ is the determinant computed from the sum of arborescence weights for $G_i$.  The determinant corresponding to the graph $G_{i+1}$ is given by Eq. (\ref{eq:Dip}), where $D_{(i)}$ is the sum over arbosescence weights of graph $G_{(i)}$, which is a subgraph of $G_i$ that includes no in arcs to vertex $i$.  We may alternatively think of $G_{(i)}$ as a subgraph of $G_i$ that is ``rooted'' at vertex $i$; that is, that has an arc $(0,i)$ with weight 1.  An in arc to $i$ can replace the weight 1 $(0,i)$ arc.
    
    The first term on the right side of Eq. (\ref{eq:Dip}) arises from the fact that any arborescence in $G_i$ can be extended with an arc to vertex $i+1$.  The second term comes from arborescences that must include the root arc $(0, i+1)$ and then the arc $(i+1,i)$ back into $G_i$.  This extends branchings in $G_i$ that are rooted at vertex $i$.  In a similar fashion, one may compute the determinant corresponding to $G_{(i+1)}$ to be given by Eq. (\ref{eq:Drootip}) by considering the graph that must include the weight 1 arc $(0, i+1)$.  With the conditions that $D_1 = v_{1,1}$ and $D_{(1)} = 1$, Eqs. (\ref{eq:Dip}) and (\ref{eq:Drootip}) can be solved together recursively to find $D_n$, which is the determinant of matrix $T$.
\end{proof}

\begin{center}
\begin{figure}[htb]
\centering
    \includegraphics[width=0.4\textwidth]{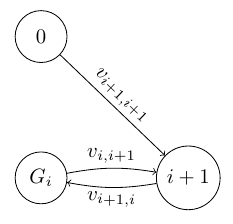}
\caption{Matrix digraph $G_{i+1}$ derived from predecessor graph $G_i$.}
\label{fig:trid_i}
\end{figure}
\end{center}

The recursive determinant calculations embodied in either Eqs. (\ref{eq:Dip}) and (\ref{eq:Drootip}) or Eq. (\ref{eq:continuant}) have cost linear in $n$ as opposed to cubic in $n$ for a general matrix.  The continuant method of Eq. (\ref{eq:continuant}) is a three-term recurrence (terms $K_{i-1}$, $K_i$, and $K_{i+1}$).  The arboresence method of Eqs. (\ref{eq:Dip}) and (\ref{eq:Drootip}) is a less efficient four-term recurrence (terms $D_i$, $D_{(i)}$, $D_{i+1}$, and $D_{(i+1)}$), although effectively it is only a three-term recurrence since we only need to keep track of the current determinant value; that is, we can update $D_{(i+1)}$ with $D_i$ and $D_{(i)}$ in Eq. (\ref{eq:Drootip}).  We can then compute $D_{i+1}$ from Eq. (\ref{eq:Dip}) but store the value in $D_i$ for the next iteration.

The principal difference between the two recursive methods comes from the treatment of the $i$-th diagonal element of the tridiagonal matrix $T$.  This may be understood graphically.  For the method based on arborescences, the subgraph $G_i$ does not include the arc $(i+1, i)$; thus, $G_i$ describes an $i \times i$ sub-matrix of $T$ that is its own independent tridiagonal matrix, and the $i$-th diagonal element does not include the weight of the arc $(i+1, i)$.  The determinant $D_i$ of the matrix corresponding to $G_i$ is the sum over arborescence weights of $G_i$.  Eq. (\ref{eq:Dip}) appropriately extends $D_i$ to $D_{i+1}$ when vertex $i+1$ and arcs $(i, i+1)$, $(i+1,i)$, and $(0,i+1)$ are added to extend graph $G_i$ to $G_{i+1}$.

For the method of continuants, the diagonal element $a_i$ includes the weight of the arc $(i+1, i)$.  In this case, the corresponding graph $G_i' = G_i \cup (i+1,i)$.  The continuant $K_i$ for $i < n$ is then the sum over arborescence weights on $G_i$ plus the sum over weights of branchings formed by replacing the in arc to vertex $i$ in any of the arborescences of $G_i$ with the arc $(i+1, i)$.  This is the determinant of the $i \times i$ sub-matrix of $T$ in Eq. (\ref{eq:trid}) containing rows and columns 1 through $i$.

When one extends the calculation to $K_{i+1}$, one adds the arcs $(i,i+1,)$, $(0, i+1)$, and $(i+2, i+1)$ to $G_i'$ to create the graph $G_{i+1}'$.  Addition of these arcs to $K_i$ creates new branchings contributing to $K_{i+1}$; however, it also creates a set of subgraphs with the cycle $(i+1,i)-(i, i+1)$.  These must be subtracted off to ensure only branchings are present.  This is the origin of the second term on the right side of Eq. (\ref{eq:continuant}).  This issue does not occur for vertex $n$ since there is no arc $(n+1, n)$.  Thus, while $K_i \ne D_i$ for $i < n$, $D_n = K_n$, and the two methods give the same result for the determinant of $T$.

\begin{remark}
    The graphical picture of subgraphs and subtracted cycles in the calculation of continuants may be extended to general matrices.  Consider an $n\times n$ matrix $A$ with elements given by Eq. (\ref{eq:aij}).  The standard formula for the determinant of $A$ is
    \begin{equation}
        det(A) = \sum_{i_1, i_2, ..., i_n} \epsilon_{i_1i_2...i_n} a_{1i_1} a_{2i_2}...a_{ni_n}
        \label{eq:leibnitz}
    \end{equation}
    with the sum taken over all $n$-tuples $(i_1, i_2, ..., i_n)$ and $\epsilon_{i_1i_2...i_n}$ the usual Levi-Civita symbol.  In the matrix digraph $G$ corresponding to $A$, the element $a_{ii}$ is the sum of the weights of all arcs into vertex $i$.  If we define the weight of a subgraph of $G$ to be the product of the weights of all arcs in the subgraph, the term $a_{11} a_{22} ... a_{nn}$ in Eq. (\ref{eq:leibnitz}) corresponds to the sum over weights of all subgraphs of $G$ with indegree exactly one for each vertex except the root vertex $0$ (the source of the arc with weight $v_{ii}$).  This set of subgraphs will include all subgraphs with combinations of simple cycles plus all possible arborescences in $G$.  From Theorem \ref{theorem:detDA},  the remaining terms in Eq. (\ref{eq:leibnitz}) must subtract off the subgraphs with cycles leaving only the subgraphs that are arborescences.
\end{remark}

The recursive calculation in Eqs. (\ref{eq:Dip}) and (\ref{eq:Drootip}) works by creating new arborescences on top of existing arborescences without the need to subtract off cycles.  While this method is efficient for tridiagonal matrices, it becomes less so for more complicated matrices.  Consider, for example, a pentadiagonal matrix.  In this case, one must keep track of $D_i$, the sum over arborescences of graph $G_i$, $D_{(i)}$, the sum over weights of arborescence of the subgraph of $G_i$ rooted at $i$ (with arc $(0,i)$ having weight 1), $D_{(i-1)}$, the sum over weights of arborescences of the subgraph of $G_i$ rooted at $i-1$, and $D_{(i-1, i)}$, the sum over weights of arborescences of the subgraph of $G_i$ rooted at $i-1$ and $i$.  In addition, one needs to keep track of $D_{(i)\not\to i-1}$ and $D_{(i-1)\not\to i}$.  The former of these two quantities is the sum over weights of arborescences rooted at $i$ but not having a path to $i-1$ while the latter is the sum over weights of arborescences rooted at $i-1$ but not having a path to $i$.  These are needed because, in adding vertex $i+1$, one can have, for example, an arborescence that has an arc $(i+1, i)$ and then $(i-1, i+1)$.  A new arborescence  can only occur by adding these arcs to $D_{(i)\not\to i-1}$, otherwise, there would be a cycle.  With these definitions, and our notion that $v_{i,j}$ is the weight of the arc $(i,j)$ in $G_i$ (except $v_{i,i}$ is the weight of arc $(0,i)$), we may find the following recurrence relations:
\begin{equation}
        \begin{split}
        D_{i+1} = \left(v_{i-1,i+1} + v_{i, i+1} + v_{i+1, i+1}\right) D_i + v_{i+1,i+1} v_{i+1,i} D_{(i)}\\
        + v_{i+1,i+1} v_{i+1, i-1} D_{(i-1)} + v_{i+1, i+1} v_{i+i, i} v_{i+1, i-1} D_{(i-1, i)}\\ + v_{i+1, i} v_{i-1,i+1} D_{(i)\not\to i-1} + v_{i+1, i-1} v_{i, i+1} D_{(i-1)\not\to i} 
        \end{split}
        \label{eq:D_p_i}
\end{equation}
\begin{equation}
    D_{(i+1)} = D_i + v_{i+1,i} D_{(i)} + v_{i+1,i-1} D_{(i-1)}\\ + v_{i+1,i-1} v_{i+1,i} D_{(i-1,i)}
    \label{eq:D_p_p_p}
\end{equation}
\begin{equation}
    \begin{split}
    D_{(i)} = \left(v_{i-1,i+1} + v_{i,i+1} + v_{i+1,i+1}\right)D_{(i)}\\
    + v_{i+1,i-1} \left(v_{i,i+1} + v_{i+1,i+1}\right) D_{(i-1, i)} 
    \end{split}
    \label{eq:D_p_p_i}
\end{equation}
\begin{equation}
    D_{(i, i+1)} = D_{(i)} + v_{i+1, i-1} D_{(i-1,i)}
\end{equation}
\begin{equation}
    D_{(i+1)\not\to i} = D_i + v_{i+1,i-1} D_{(i-1)\not\to i}
\end{equation}
\begin{equation}
    D_{(i)\not\to i+1} = v_{i+1,i+1} D_{(i)} + v_{i-1,i+1} D_{(i)\not\to i-1} + v_{i+1,i+1} v_{i+1, i-1} D_{(i-1,i)}
\end{equation}
The initial values are $D_1 = v_{1,1}$, $D_{(1)} = 1$, and $D_{(0)} = D_{(0,1)} = D_{(1)\not\to 0} = D_{(0)\not\to 1} = 0$.  This method is in fact a 12-term recurrence since the $D_{(i)}$ on the left side of Eq. (\ref{eq:D_p_p_i}) is the updated version of $D_{(i-1)}$ on the right side of Eqs. (\ref{eq:D_p_i}) and (\ref{eq:D_p_p_p}), although we would only need to keep track of 11 terms since we can update the current determinant with Eq. (\ref{eq:D_p_i}) at the end of each iteration using the un-updated values for the other terms.

A six-term recurrence for computing determinants of pentadiagonal matrices exists \cite{sweet1969recursive}, and there are other highly efficient algorithms \cite{evans1975recursive, SOGABE2008835}.  Again, from a graphical picture, these methods compute subgraphs with all vertices having indegree no greater than one and then subtracting off cycles.  Our method requires more terms because it must explicitly keep track of arborescence weights on rooted subgraphs of the matrix digraph.

For even larger diagonal matrices, the number of recurrence relations for our graphical approach becomes even larger.  One must keep track of the various sums of arborescence weights of rooted subgraphs at each iteration, including those of subgraphs that do not have a path from the rooted vertex to another particular vertex.  While this is a tedious procedure, the general approach is straightforward, and we plan to develop it more explicitly in a future work.

Given the number and complexity of the recurrence relations for a matrix with a large number of diagonals, a recursive approach to computing determinants that builds arborescences only on top of existing arborescences is likely not useful except in special sparse matrix cases.  Nevertheless, these considerations give an interesting picture of matrix determinants and how they can be built up from sums of arborescence weights over sub-graphs of the matrix digraph. 

The recursive approach can also provide valuable insights into physical processes modeled by matrices.  For example, in the time evolution of a $n$-state system discussed in \S \ref{sec:application}, the flow of probability of state $j$ at time $t$ to that of state $i$ at time $t + \Delta t$ was given by the fractional weight of arborescences with a path from vertex $i$ to $j$.
We have been able to use matrix-tree theorem based recursive strategies on tridiagonal matrices similar to those in Eqs. (\ref{eq:Dip}) and (\ref{eq:Drootip}) to compute inverse matrix elements relevant for heavy-element nucleosynthesis \cite{2025ApJ...982..139L}.  This approach allowed us to define effective rates that account for cycling among species over time step $\Delta t$.  This gave a clear picture of the flow from one species to another over time in the network.

We have developed a GitHub repository of Python codes to illustrate some of the content of this section \cite{Ghosh_Matrix_Digraph_2023}.  The repository includes codes to compute matrix determinants via the $k$-th best arborescence algorithm and the pentadiagonal matrix recursive strategy described above.  There is also a code to generate random matrices to use with the determinant codes.

\section{Conclusion}
We have provided proofs and explicit examples for the weighted digraph versions of the matrix-tree and matrix-forest theorems.  While these theorems are already well known (for example, Theorem 3.1 and, especially, Corollary 4.1 of Moon \cite{MOON1994163}), our versions of the theorems with an extra root vertex are particularly useful for calculations of the determinant of a matrix via $k$-th best arborescence algorithms.  They can also be helpful for developing other strategies for computing determinants, such as the recursive method we briefly outlined, and for interpreting physical processes modeled by matrices.

\section{Acknowledgments}

This work was supported by NASA Emerging Worlds grant 80NSSC20K0338.

\bibliographystyle{plain}
\bibliography{clemson}

\end{document}